\def\R{\mathbb{R}}
\def\Z{\mathbb{Z}}
\def\p{\partial}
\def\vo{\vspace{1\baselineskip}}
\def\be{\begin{equation}}
\def\ee{\end{equation}}
\newtheorem{theorem}{Theorem}[section]
\newtheorem{lemma}{Lemma}[section]
\newtheorem{proposition}{Proposition}[section]
\theoremstyle{definition}
\theoremstyle{remark}
\newtheorem{remark}{Remark}[section]
\numberwithin{equation}{section}
\begin{document}
 \title[3D Relativistic Vlasov-Poisson]{Global solution of the   3D relativistic Vlasov-Poisson system for a class of large data}
\author{Xuecheng Wang}
\address{YMSC, Tsinghua  University, Beijing, China,  100084
}
\email{xuecheng@tsinghua.edu.cn,\quad xuechengthu@outlook.com 
}

\thanks{}
 
\maketitle
\begin{abstract}
For a class of arbitrary large  initial data with   radial symmetry or cylindrical symmetry,   we prove the  existence of global solutions for the $3D$ relativistic Vlasov-Poisson system for  the plasma physics case. The compact support assumption is not imposed for   both cases. The essential lower bound assumption of the angular momentum in the previous work of Glassey-Schaeffer  \cite{glassey2}   is  not imposed on the initial data for the  cylindrical symmetry case. 
\end{abstract}

\section{Introduction}

We are interested in  the large data problem for  the $3D$ relativistic Vlasov-Poisson (RVP) system in the  plasma physics case, which reads as follows. 
\be\label{vlasovpo}
(\textup{RVP})\quad \left\{\begin{array}{c} 
\p_t f + \hat{v} \cdot \nabla_x f + E \cdot \nabla_v f =0,\quad E=\nabla\phi \\
\Delta \phi = \rho(t), \quad \rho(t):= \displaystyle{\int_{\R^3} f(t,x,v) d v},\quad f(0,x,v)=f_0(x,v),\\ 
\end{array}\right. 
\ee
where $\hat{v}:=v/\sqrt{1+|v|^2}$, $f(t,x,v)\geq 0$ denotes the distribution of particles and $\rho(t,x)$ denotes the density of particles.

  From (\ref{vlasovpo}), the backward characteristics associated with the Vlasov-Poisson  equation read  as follow, 
\be\label{characteristicseqn}
\left\{\begin{array}{l}
\displaystyle{\frac{d}{d s} X(s;t,x,v) = \widehat{V}(s;t,x,v)}, \quad 
\displaystyle{\frac{d}{d s} V(s;t,x,v) = \nabla_x\phi(s,X(s;t,x,v)) }  \\
X(t;t,x,v)=x, \quad V(t;t,x,v)=v.\\ 
\end{array}\right. 
\ee
For   convenience in notation, if without  causing confusion, we usually drop the dependence of characteristics with respect to $(t,x,v)$, which is fixed for most of time, and abbreviate characteristics as $(X(s), V(s)).$
 
  For the RVP system (\ref{vlasovpo}), the following conservation laws hold, 
\be\label{conservationlaw}
E(t):=\int_{\R^n} |\nabla_x \phi(t)|^2 +   \int_{\R^3}  \int_{\R^3} |v|  f(t,x,v) d x d v =E(0),  \quad 
 \| f(t,x,v)\|_{L^p_{x,v}}=   \| f(0,x,v)\|_{L^p_{x,v}} ,
\ee
where $p\in[1,\infty].$

There is a large literature devoted to the study of the non-relativistic case, i.e., with $\hat{v}$ replaced by $v$ in (\ref{vlasovpo}). 
We do not try to elaborate it here but refer readers to Anderson \cite{anderson} and Mouhot \cite{mouhot1} and references therein for more detailed introduction.   A remarkable result by Lions-Perthame \cite{lions} (see also \cite{Pfaffelmoser}) showed that the \textbf{ non-relativistic} Vlasov-Poisson system    in  the plasma physics case  admits global classic solution for very general initial data.

However, the literature on the study of the relativistic case is relatively small. Whether the analogue of Lions-Perthame \cite{lions} holds for $3D$ RVP remains an open problem. Indeed, the conservation law  on the momentum, see (\ref{conservationlaw}), in the relativistic case is much weaker than the non-relativistic case.

If the initial data is smooth and small, then the system (\ref{vlasovpo}) admits global solution. Moreover, the regularity of initial data can be propagated, and the density and its derivatives decay sharply over time, see \cite{ionescu,wang}.

However, the picture of   the large data problem of RVP  seems far from complete. A well-known result by Glassey-Schaeffer \cite{glassey1} says that the RVP system (\ref{vlasovpo})   admits global classical solution if the initial data has radial symmetry and also has compact support in both $x$ and $v$.

With another assumption on the angular momentum,  the spherical symmetry assumption imposed on the initial data $f_0(x,v)$ of the RVP (\ref{vlasovpo}) can be relaxed to the cylindrical symmetry.  The  result of  Glassey-Schaeffer  \cite{glassey2}  says that the   RVP system (\ref{vlasovpo}) admits  global solution if the cylindrically symmetric initial data has compact support  in both $x$ and $v$ and its angular momentum is bounded away from zero, c.f., (\ref{boundedangularmomentum}).  Without loss of generality, we assume that $f_0(x,v)$ is cylindrically symmetric in $x_1x_2$-plane, which will be called the \textit{horizontal plane} in later context. More precisely, the following equality holds for any $x,v\in \R^3$, 
\be\label{april25eqn1}
f_0(x_1,x_2,x_3,v_1,v_2,v_3)= f_0(|\slashed{x}|,0,x_3,|\slashed{v}|,0,v_3),\quad  \slashed{x}:=(x_1,x_2), \quad  \slashed{v}=(v_1,v_2). 
\ee
The lower bound assumption on the angular momentum  of particles  can be understood in the following sense,  
\be\label{boundedangularmomentum}
(\textup{Angular momentum assumption})\qquad f_0(x,v)=0,\quad  \textup{if\,\,} (x,v)\in \{(x,v): x,v\in \R^3, |\slashed x\times \slashed v |\leq  C \},
\ee
where $C$ is some absolute constant.

Due to the gradient structure of the electric field and the cylindrical symmetry of solution, a crucial advantage of imposing the lower bound angular momentum assumption is that the space characteristics are far away from the $z$-axis because the angular momentum $\slashed X(s)\times \slashed V(s)$ is conserved along the characteristics over time. 

More precisely, from the cylindrical symmetry of solution, for simplicity of notation, we define $\tilde{\phi}: \R_t\times \R_{+}\times \R\longrightarrow \R$ as follows, 
\be\label{april26eqn1}
\tilde{\phi}(t,r,x_3):=\phi(t,r, 0,x_3),\quad \nabla_x\phi= \frac{(x_1,x_2,0)}{r} \p_r \tilde{\phi}(t,r,x_3) +(0,0,1)\p_{x_3}\tilde{\phi}(r,x_3), \quad r:=|\slashed x|. 
\ee

Hence, from (\ref{april26eqn1}) and (\ref{characteristicseqn}), we have
\[
\frac{d}{d s } (\slashed X(s;t,x,v)\times \slashed V(s;t,x,v))= \frac{d}{d s } \big( X_1(s;t,x,v) V_2(s;t,x,v)- X_2(s;t,x,v)V_1(s;t,x,v)\big)
\]
\[
= \big( X_1(s;t,x,v) X_2(s;t,x,v)- X_2(s;t,x,v)X_1(s;t,x,v)\big)\p_r \tilde{\phi}(s,|\slashed X(s;t,x,v)|,X_3(s;t,x,v))=0.
\]
That is to say, the angular momentum $\slashed X(s)\times \slashed V(s)$ is conserved along the characteristics.

In this paper, we are interested to improve previous results of Glassey-Schaffer for both the radial symmetry case  in \cite{glassey1} and the cylindrical symmetry case in \cite{glassey2}. More precisely, for the radial symmetry case, our main result is stated as follows, 
 \begin{theorem}\label{maintheoremradial}
Assume that the   initial data  $  f_0(x,v)\in H^s(\R_x^3 \times \R_v^3)$, $s\in \mathbb{Z}_{+},s\geq 6$ is radial in the sense that, $\forall R\in SO(3), f_0(  Rx, Rv) = f_0( x,v). $ Moreover,   we assume that    the initial data decays polynomially as $(x,v)\longrightarrow \infty$ in the following sense, 
\be\label{assumptiononinitialdata}
\sum_{\alpha \in \mathbb{Z}_{+}^6,|\alpha|\leq s} \|(1+|x|+|v|)^{N_r}\nabla_{x,v}^\alpha f_0(x,v)\|_{L^2_{x,v}}< +\infty, \quad N_r:= 100. 
\ee
Then the RVP system \textup{(\ref{vlasovpo})}   admits global solution in $ H^s(\R_x^3 \times \R_v^3)$. 

\end{theorem}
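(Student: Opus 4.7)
The plan is to reduce the theorem to a uniform-in-time a priori bound on $\|\rho(t,\cdot)\|_{L^\infty}$, exploit the radial symmetry and angular-momentum conservation to derive such a bound, and then propagate the weighted Sobolev regularity. Local well-posedness of (\ref{vlasovpo}) in $H^s$ is standard, and the usual continuation criterion shows that blow-up can only be caused by loss of control of $\|\rho\|_{L^\infty}$, equivalently, by uncontrolled growth of the momentum support of $f$.

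The radial symmetry and Gauss's law give the pointwise representation
$$E(t,x) = \frac{x}{|x|^3}\,M(t,|x|),\qquad M(t,r) := \int_{|y|\leq r}\rho(t,y)\,dy,\qquad M(t,\infty)=M_{\text{tot}}\text{ (conserved)}.$$
In particular $\nabla_x\phi(s,X(s))$ is parallel to $X(s)$, so the angular momentum $X(s)\times V(s)=x\times v$ is conserved along characteristics, yielding the centrifugal bound $|X(s)|\,|V(s)|\geq \ell:=|x\times v|$. A direct computation combining the relativistic identity $\hat V\cdot V = |V|^2/\sqrt{1+|V|^2}$ with this radial representation gives the key identity
$$\frac{d}{ds}\sqrt{1+|V(s)|^2} = \frac{\dot R(s)\,M(s,R(s))}{R(s)^2},\qquad R(s):=|X(s)|.$$
Together with the centrifugal lower bound and a pseudo-energy decomposition via $\Psi(s,r):=\int_r^\infty M(s,r')/(r')^2\,dr' \leq M_{\text{tot}}/r$, this reproduces the argument of Glassey--Schaeffer \cite{glassey1} and yields a quantitative pointwise bound of the form $|V(t;0,x,v)|\leq F(t,|x|,|v|,\ell)$ with polynomial growth in $t$ and in its arguments.

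The main novelty is that $f_0$ is not compactly supported but only polynomially decaying. Using Liouville's theorem,
$$\rho(t,y) = \int_{\R^3} f_0\bigl(x_\star(t,y,w),\,v_\star(t,y,w)\bigr)\,dw,$$
where $(x_\star,v_\star)$ is the pre-image of $(y,w)$ under the time-$t$ flow, and applying the momentum bound backward in time, one shows that large $|w|$ forces large $|x_\star|+|v_\star|$. The generous weight exponent $N_r=100$ in (\ref{assumptiononinitialdata}) then makes this integral absolutely convergent; a dyadic decomposition in $|w|$ and in the angular momentum $|x_\star\times v_\star|$ closes the estimate and yields $\|\rho(t,\cdot)\|_{L^\infty}\leq C(t)$ with at most polynomial growth in $t$. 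Analogous weighted estimates on $\rho$ then give the $L^\infty$ bounds on $\nabla E$ and $\nabla^2 E$ needed for a Gr\"onwall inequality on the weighted $H^s$ norm of $f$, completing the globalization.

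The principal obstacle is this last step: translating the Glassey--Schaeffer momentum bound (designed for compact support) into a pointwise-in-$(x,v)$ estimate sharp enough to integrate against the polynomial weight. The delicate regime is that of particles with small but positive angular momentum, whose radial motion can sweep close to the origin, experience large accelerations, and potentially escape to large $|v|$; the choice $N_r=100$ is intended to leave sufficient margin for the dyadic losses incurred in this worst-case regime and for the analogous bookkeeping needed when propagating derivatives in $(x,v)$.
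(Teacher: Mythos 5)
Your proposal has a genuine gap at its pivotal step. Everything hinges on the claimed pointwise bound $|V(t;0,x,v)|\leq F(t,|x|,|v|,\ell)$, $\ell=|x\times v|$, which you obtain by ``reproducing'' the Glassey--Schaeffer radial argument through the centrifugal inequality $|X(s)|\,|V(s)|\geq\ell$. Any bound produced that way controls the closest approach to the origin only through $\ell$ and therefore degenerates as $\ell\to 0$. But Theorem \ref{maintheoremradial} imposes no vanishing or moment condition of $f_0$ at zero angular momentum; such a hypothesis appears only in the cylindrical Theorem \ref{maintheorem} (the $|\slashed x\times \slashed v|^{-13}$ moment in (\ref{assumcylinrical})). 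The weight $(1+|x|+|v|)^{N_r}$ in (\ref{assumptiononinitialdata}) gives no smallness whatsoever for particles with $|x|,|v|=O(1)$ and arbitrarily small $\ell$, so your dyadic decomposition in $|x_\star\times v_\star|$ has nothing to sum against: those phase-space regions carry $O(1)$ density and your $F$ provides no bound there. You flag exactly this regime as ``the principal obstacle,'' but the proposal supplies no mechanism to handle it, and the asserted polynomial-in-$t$, polynomial-in-$(|x|,|v|,\ell)$ quantitative version of the Glassey--Schaeffer bound for non-compactly-supported data is stated rather than derived. As written, the argument does not close.

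For comparison, the paper's radial proof uses no angular momentum at all, which is precisely why no hypothesis at $\ell=0$ is needed. It combines the outward-pointing field with the pointwise bound $|\nabla_x\phi(t,x)|\lesssim |x|^{-2}$ and the monotonicity of $V(s)\cdot X(s)/|V(s)|$ in (\ref{jan13eqn41}), valid because the force is repulsive: once a particle's speed reaches a threshold it is, and remains, outgoing, so $|X(s)|\gtrsim s-\tau$ afterwards and the further acceleration $\int (s-\tau)^{-2}\,ds$ is integrable. This controls the majority set (Proposition \ref{majority}) uniformly in $\ell$; particles outside it started with large $|X(0)|+|V(0)|$ and are handled by the decay assumption, and a moment bootstrap yields $\tilde M^r_n(t)\lesssim (1+t)^{2n_r}$ and hence a polynomial bound on $\|\nabla_x\phi\|_{L^\infty_x}$. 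To salvage your route you would need either such an $\ell$-independent mechanism for the low-angular-momentum particles or an extra hypothesis near $\ell=0$ that the theorem does not grant; the remaining parts of your outline (reduction to a continuation criterion, Liouville plus the weight to handle non-compact support, propagation of weighted $H^s$ regularity) are consistent in spirit with the paper.
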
 

For the cylindrical symmetry case, our main result is stated as follows, 
\begin{theorem}\label{maintheorem}
Let   $N_c:= 10^{8}. $  If the  cylindrically  symmetric initial data \textup{(}in the sense of \textup{(\ref{april25eqn1})}\textup{)} $  f_0(x,v)\in H^s(\R_x^3 \times \R_v^3)$, $s\in \mathbb{Z}_{+},s\geq 6$      satisfies  the following estimate, 
\be\label{assumcylinrical}
\sum_{\alpha \in \mathbb{Z}_{+}^6,|\alpha|\leq s} \|(1+| x|+|v|)^{N_c}\nabla_{x,v}^\alpha f_0(x,v)\|_{L^2_{x,v}} +\int_{\R^3}\int_{\R^3} \frac{1}{|\slashed x \times \slashed v|^{13}} f_0(x,v) d x d v  < +\infty. 
\ee
Then the RVP  system \textup{(\ref{vlasovpo})} admits global solution in $ H^s(\R_x^3 \times \R_v^3)$. 

\end{theorem}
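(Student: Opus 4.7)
The plan is to reduce global existence to an a priori bound on $\|\n\phi(t)\|_{L^\infty}$ on every finite time interval $[0,T]$, and to achieve this by combining the Glassey--Schaeffer cylindrical-symmetry analysis from \cite{glassey2} with a decomposition of $f$ by angular momentum that converts the pointwise lower bound hypothesis of \cite{glassey2} into the integrability hypothesis in (\ref{assumcylinrical}). Local well-posedness in $H^s$ for $s\geq 6$ is standard; since $\hat v$ is bounded, a standard continuation criterion reduces global existence to a uniform-in-time bound on the field $\|\n\phi(t)\|_{L^\infty}$ together with propagation of the weighted norms in (\ref{assumcylinrical}).

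The key observation is that by Liouville's theorem and the pointwise conservation of $\slashed X(s)\times\slashed V(s)$ derived in the excerpt, the weighted moment
\[
\mathcal L(t):=\int_{\R^3}\int_{\R^3}\frac{f(t,x,v)}{|\slashed x\times\slashed v|^{13}}\,dx\,dv
\]
is conserved, so that $\mathcal L(t)=\mathcal L(0)<\infty$ by hypothesis (\ref{assumcylinrical}). Together with energy conservation and the $L^p$ conservation laws from (\ref{conservationlaw}), this supplies a robust set of a priori quantities that do not require compact support.

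Next, I would bound $\|\rho(t)\|_{L^\infty}$ in a bootstrap by decomposing the particles according to their initial angular momentum $L_0:=|\slashed x_0\times\slashed v_0|$, with a threshold $\lambda>0$ to be optimized at the end. For high-angular-momentum characteristics ($L_0\geq\lambda$), the conservation of angular momentum combined with a bootstrap assumption of the form $\sup_{s\leq t}|V(s)|\leq P(t)$ gives $|\slashed X(s)|\geq\lambda/P(t)$, so the particle stays at distance at least $\lambda/P(t)$ from the $z$-axis along its trajectory; the cylindrical decomposition (\ref{april26eqn1}) of the field is regular there, and the Glassey--Schaeffer analysis from \cite{glassey2} can be adapted to yield a bound on this contribution to $\rho$ as a controlled function of $t$, $P(t)$, and the propagated moments. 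For low-angular-momentum characteristics ($L_0<\lambda$), Chebyshev's inequality yields a total phase-space mass $\leq\lambda^{13}\mathcal L(0)$, and combining this with the $L^\infty$ conservation of $f$ and the weighted moments from (\ref{assumcylinrical}) produces a bound on this part of $\rho$ that is small in $\lambda$. Optimizing in $\lambda$ gives a pointwise bound on $\|\rho(t)\|_{L^\infty}$, hence on $\|\n\phi(t)\|_{L^\infty}$, in terms of $P(t)$ and the initial moments.

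Finally, I would close a Gronwall-type inequality for $P(t)$ by integrating this field estimate along each characteristic exactly as in \cite{glassey2}, and propagate the weighted norms in (\ref{assumcylinrical}) by commuting the Vlasov operator with $(1+|x|+|v|)^{N_c}\n_{x,v}^\alpha$ multipliers and running standard energy estimates; higher-order $H^s$ regularity propagates in the usual way once $\|\n\phi\|_{L^\infty}$ is locally integrable in time. I expect the main obstacle to be the quantitative accounting of polynomial weights in the angular-momentum decomposition: the low-$L_0$ particles can concentrate arbitrarily close to the $z$-axis where the transverse field generated by the rest of the fluid can be large, and balancing the small mass $\lambda^{13}$ against the singular geometry while losing as little polynomial decay in $(x,v)$ as possible is what forces the very generous exponent $N_c=10^8$ in (\ref{assumcylinrical}).
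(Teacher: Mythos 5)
Your reduction to an a priori bound on $\|\nabla_x\phi\|_{L^\infty}$ and your observation that $\int\int |\slashed x\times\slashed v|^{-13} f(t,x,v)\,dx\,dv$ is conserved are both sound (the paper uses exactly this conserved quantity, its $J(s)$). But the central step has a genuine gap. The claim that the low-angular-momentum part of the density is pointwise ``small in $\lambda$'' is false precisely where it matters: at a point on or near the $z$-axis \emph{every} velocity has small $|\slashed x\times\slashed v|$, so the low-$L_0$ part there is essentially all of $\rho$, and its size is governed by $\|f\|_{L^\infty}$ times the velocity-support volume $\sim P(t)^3$, independent of $\lambda$. Chebyshev only gives an $L^1_x$ (mass) bound $\lambda^{13}\mathcal{L}(0)$, i.e.\ an integrated statement, not a pointwise one; feeding it into $\|\nabla\phi\|_{L^\infty}\lesssim\|\rho\|_{L^\infty}^{2/3}\|\rho\|_{L^1}^{1/3}$ gives at best $\lambda^{13/3}P(t)^2$ for this contribution. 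Meanwhile, for the high-$L_0$ part you only know $|\slashed X(s)|\gtrsim\lambda/P(t)$, and the Glassey--Schaeffer estimates you propose to ``adapt'' carry inverse powers of this distance (equivalently, of the angular-momentum lower bound), so they blow up as $\lambda\to 0$ and as $P(t)$ grows. Optimizing in $\lambda$ then pits $\lambda^{13/3}P^2$ against $(P/\lambda)^k$-type losses, and there is no reason the resulting field bound is better than a power of $P(t)$ larger than one, so the Gronwall/bootstrap for $P(t)$ does not close even on a finite time interval. In short, you have restated, not resolved, the core difficulty of particles travelling near the axis without a pointwise angular-momentum lower bound; the conserved singular moment alone is too weak because it only controls integrals, and your own closing paragraph acknowledges rather than supplies the missing balance.

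The paper closes this gap with two mechanisms absent from your sketch. First, a singular-weighted space-time (virial-type) estimate, Proposition \ref{spacetimeestimate}, obtained from the monotone multiplier $\omega_{\mu}(x,v)$, which quantifies non-concentration near the axis and yields $L^1_tL^\infty_x$ bounds for frequency- and velocity-localized pieces of the field, cf.\ (\ref{june9eqn10}). Second, instead of bounding $\|E\|_{L^\infty}$ pointwise and running Gronwall, it estimates the time integral $\int_{s_1}^{s_2}\tilde V(s)\cdot E_{k;j_1,j_2}(s,X(s))\,ds$ along characteristics: the conserved moment $J(s)$ handles the near-axis region, and on the residual resonant parameter set $\mathcal{B}$ a normal-form/integration-by-parts-in-time argument exploits the lower bound on $\hat V(s)\cdot\xi-\hat v\cdot\xi$ (Lemmas \ref{roughlemma1}--\ref{corelemma1}). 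This yields the improved velocity bound $|V(s)|\le 2^{(1-\epsilon/2)M_t}$ on the majority set, which is what makes the moment bootstrap of Section \ref{proofofthe} close. Your proposal would need an analogue of at least one of these ingredients before the optimization in $\lambda$ could possibly work.
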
 

A few remarks are in order. 
\begin{remark}
The main merit of this paper are two new observations in the cylindrical symmetry case.

Firstly, to understand the close to  the $z$-axis scenario, we introduce  a  weighted  space-time estimate with a choice of singular weighted function,  see the estimate  (\ref{jan12eqn36}) in  Lemma \ref{spacetimeestimate} for more details.  By using this estimate, we know that particles are not concentrated around  the $z$-axis.

 Lastly,  by  carefully  analyzing  the time resonance set, we show that  a smoothing effect is available for the action of  electric field along characteristics. Roughly speaking, when the frequency is localized away from the time resonance set, $\int_{t_1}^{t_2} P \phi(X(s)) d s$ is smoother than $  P\phi(X(s))$ itself, where $P$ is a Fourier multiplier operator. This type of smoothing effect was pointed out by Klainerman-Staffilani \cite{Klainerman3} in the context of the relativistic Vlasov-Maxwell system. Due to the different speeds of the  electromagnetic field and   massive particles, the  smoothing effect is more intuitive and transparent in the relativistic Vlasov-Maxwell system. However, for  the RVP system (\ref{vlasovpo}),  the smoothing effect is rather obscure and technical. We exploit the smoothing effect by doing normal form transformation after localizing away from the time resonance set.

\end{remark}

\begin{remark}
The compact support assumptions in previous results of Glassey-Schaffer \cite{glassey1,glassey2} are removed   for both the radial case and the cylindrical symmetry case.  Despite that the lower bound assumption of the planar momentum is removed, as stated in (\ref{boundedangularmomentum}),  we still need an vanishing order condition at zero planar momentum for the initial data. The number of order ``$13$'' in  (\ref{assumcylinrical})  can definitely be  improved. However,   there are additional  difficulties in  removing completely  the  vanishing order condition. See also \cite{wang3} for additional remarks and properties of RVP system that might shine some light on the future study. 
\end{remark}
\begin{remark}
 Last and also the least, the plausible  goal of optimizing $s,
$ and $N_r, N_c$   is not pursued here. 
\end{remark}

\subsection{Notation}
 For any two numbers $A$ and $B$, we use  $A\lesssim B$ and $B\gtrsim A$  to denote  $A\leq C B$, where $C$ is an absolute constant. We use the convention that all constants which only depend on the \textbf{initial data}, e.g., the conserved quantities in  (\ref{conservationlaw}),  will be treated as absolute constants.  For any $v=(v_1, v_2,v_3)\in \R^3$ and $u\in \R^3/\{0\}$, we use $\slashed v$ to denotes $(v_1,v_2)\in \R^2$ and use $\tilde{u}$ to denote the direction of $u$, i.e., $\tilde{u}:=u/|u|.$

   We  fix an even smooth function $\tilde{\psi}:\R \rightarrow [0,1]$, which is supported in $[-3/2,3/2]$ and equals to ``$1$'' in $[-5/4, 5/4]$. For any $k,j\in \mathbb{Z}, j> 0$, we define the cutoff functions $\psi_k, \psi_{\leq k}, \psi_{\geq k}:\cup_{n\in \Z_+}\R^n\longrightarrow \R$ as follows, 
\[
\psi_{k}(x) := \tilde{\psi}(|x|/2^k) -\tilde{\psi}(|x|/2^{k-1}), \quad \varphi_0(x):=  \tilde{\psi}(|x|), \quad\forall j\in (0, \infty)\cap \Z, \varphi_j(x):=\psi_j(x) .
\]

\subsection{Local theory and the reduction of the proof}
Because our assumption on the initial data is stronger than the assumption imposed on the distribution function in \cite{luk}, by using the same argument used by Luk-Strain \cite{luk} for the relativistic Vlasov-Maxwell system, which is more difficult, we can reduce the proof of global existence to the  $L^\infty_{x}$-estimate of the electric  field $\nabla_x\phi$, which corresponds to the acceleration of the speed of particles.  

More precisely, assume that $T^{\ast}$ is the maximal time of existence of solution. If  we can show that $\nabla_x\phi\in L^\infty([0,T^{\ast})\times \R_{x}^3)$, then  the lifespan of the solution can be extended to $[0, T^{\ast}+\epsilon]$ for some positive number $\epsilon$. That is to say, the solution of RVP (\ref{vlasovpo})  exists globally in time.

\vo

\noindent \textbf{Plan of this paper}:\quad

\begin{enumerate}

\item[$\bullet$] In section  \ref{radial}, we use  the observation that  particles will travel away from the source of acceleration, i.e., the origin,  to control the majority set in the radial case. 
  
  \item[$\bullet$] In section \ref{spacetime},  we   control   the electric field for a fixed time and prove a weighted space-time estimate for the distribution function, which provides a good control for the electric field near the $z$-axis,  for the cylindrical symmetry case.

\item[$\bullet$] In section \ref{controlmajority},   we show that a smoothing effect is available for the time integral of electric field along characteristics, which enables us to control strongly the majority set, in  the cylindrical symmetry case. 

 \item[$\bullet$]  In section \ref{proofofthe},  thanks to the strong control of the majority set, we show that the high order moment grows at most polynomially for both the radial case and the cylindrical symmetry case. The boundedness of the high order moment   implies the boundedness of electric field in any finite time. Hence finishing the proof of   theorem \ref{maintheoremradial}  and theorem \ref{maintheorem}. 
\end{enumerate}

\vo 

\noindent \textbf{Acknowledgment}\qquad The author is supported by NSFC-11801299, NSFC-12141102, and MOST-2020YFA0713003.

\section{Control of the majority set in the radial   case}\label{radial}

In this section, we main control the velocity characteristics for the radial case. A key observation in   the radial case  is  that, once  the speed of particles reaches a threshold,  the majority of localized particles will travel away from the source of acceleration, which is the origin.  Thanks to the point-wise estimate of the electric field,  we know that  the majority of localized particles will not be accelerated very  much in later time. 

For $n\in \R$, we define
 \be\label{jan12eqn1}
M_{ n}(t,x):=  \int_{\R^3} (1+ |v|)^n f(t,x,v) d v,  \quad M_n(t):= \int_{\R^3}M_{n}(t,x) dx.
 \ee
 From the conservation laws (\ref{conservationlaw}), we know that $M_1(t)$ is  always bounded from the above. Let 
\be\label{enlargedmoment}
   \tilde{M}_n^r (t) := (1+t)^{2n_r}+\sup_{s\in[0,t]} M_{n_r}(s), \quad n_r:=N_r/10. 
\ee
Moreover, we define a set of majorities of particles, which initially localize around zero, at time $s$ as follows,  
\[R^r  (t,s):= \{(X(s;t,x,v),V(s;t,x,v)): | X( 0;t,x,v) |+|V( 0;t,x,v))|\leq  (\tilde{M}^r_n(t))^{1/(2n_r)}  \}.
\]

We have two basic estimates for the   $L^\infty_x$-norm of the electric field $\nabla_x\phi$, which will be elaborated in the next two Lemmas. The first estimate   (\ref{jan30eqn1}) is available mainly because of  the radial symmetry and the conservation law. 

\begin{lemma}
For any $t\in [0, T^{\ast}), x\in \R^3$, the following point-wise estimate holds, 
\be\label{jan30eqn1}
|\nabla_x \phi(t, x)|\lesssim \frac{1}{|x|^2}. 
\ee
\end{lemma}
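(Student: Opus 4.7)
The plan is to exploit the radial symmetry of $f$ (and hence of $\rho$ and $\phi$) to reduce the Poisson equation $\Delta\phi=\rho$ to a one-dimensional ODE, then close the estimate using conservation of $\|f(t)\|_{L^1_{x,v}}$.

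First I would note that since the initial data is invariant under $SO(3)$ and the RVP system is $SO(3)$-covariant, the solution $f(t,x,v)$ remains radial in $(x,v)$, so $\rho(t,x)=\rho(t,|x|)$ is radial and thus $\phi(t,x)=\phi(t,r)$ with $r=|x|$. In radial coordinates the Laplacian reads $\Delta\phi=r^{-2}(r^2\phi_r)_r$, so the Poisson equation becomes
\begin{equation*}
(r^2\phi_r(t,r))_r = r^2\rho(t,r).
\end{equation*}
Integrating from $0$ to $r$ (and using that $r^2\phi_r\to 0$ as $r\to 0$ by smoothness at the origin) yields
\begin{equation*}
\phi_r(t,r)=\frac{1}{r^2}\int_0^r s^2\rho(t,s)\,ds = \frac{1}{4\pi r^2}\int_{|y|\le r}\rho(t,y)\,dy.
\end{equation*}

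Since $f\ge 0$ we have $\rho\ge 0$, so the integral over the ball $B_r$ is bounded by the integral over all of $\R^3$. Therefore
\begin{equation*}
|\nabla_x\phi(t,x)|=|\phi_r(t,|x|)| \;\le\; \frac{1}{4\pi |x|^2}\int_{\R^3}\rho(t,y)\,dy = \frac{\|f(t)\|_{L^1_{x,v}}}{4\pi|x|^2}.
\end{equation*}
The conservation law (\ref{conservationlaw}) with $p=1$ gives $\|f(t)\|_{L^1_{x,v}}=\|f_0\|_{L^1_{x,v}}$, which is an absolute constant under our notational convention. This proves (\ref{jan30eqn1}).

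There is no real obstacle: the only subtlety is justifying the boundary term $r^2\phi_r\to 0$ at $r=0$, which follows from the regularity of $\phi$ inherited from $\rho\in L^\infty_x$ (itself controlled by the moment estimates implicit in the hypotheses, since $f\in H^s$ with weights forces $\rho$ to be bounded). The two crucial structural ingredients are radial symmetry (which collapses the Newtonian potential to the interior-mass formula, i.e.\ Gauss's law) and the sign of $\rho$ (which lets us enlarge $B_{|x|}$ to all of $\R^3$).
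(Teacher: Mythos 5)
Your proof is correct and follows essentially the same route as the paper: reduce the Poisson equation by radial symmetry to obtain $\p_r\tilde{\phi}(t,r)=r^{-2}\int_0^r s^2\tilde{\rho}(t,s)\,ds$ (the interior-mass/Gauss formula) and then bound the interior mass by the conserved $L^1_{x,v}$-norm of $f$. The only cosmetic difference is that the paper integrates $\p_r^2(r\tilde{\phi})=r\tilde{\rho}$ twice instead of integrating $(r^2\p_r\tilde{\phi})_r=r^2\tilde{\rho}$ once, which is an equivalent computation.
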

\begin{proof}
Recall that the distribution $f$ is radial, which implies that the density function $\rho(t,x)$ is radial,
\[
\forall x\in \R^3, \quad \rho(t,Rx)= \int_{\R^3} f(t,Rx, v) d v= \int_{\R^3} f(t,Rx, R \omega) |det(R)| d \omega= \int_{\R^3} f(t, x,\omega) d \omega=\rho(t,x). 
\]
 Hence $\phi(t,x)$ is also radial.  Define
\[
\tilde{\phi}(t,r):= \phi(t,r,0,0), \quad \tilde{\rho}(t, r):=\rho(t,r,0,0),\qquad\Longrightarrow \phi(t,x)=\tilde{\phi}(t, |x|), \rho(t,x)=\tilde{\rho}(t, |x|). 
\]
The Poisson equation for $\phi$ in (\ref{vlasovpo}) is reduced as follows, 
\[
\big(\p_r^2 + \frac{2}{r} \p_r\big) \tilde{\phi}(t,r)= \tilde{\rho}(t,r),\quad
\p_r^2(r\tilde{\phi})(t,r)= r\tilde{\rho}(t, r), \quad \Longrightarrow \p_r( r\tilde{\phi})= \int_{0}^{r} s \tilde{\rho}(t,s)d s +c
\]
\[
\Longrightarrow r\tilde{\phi}= \int_{0}^{r} \int_{0}^{\tau} s \tilde{\rho}(t,s)d s d\tau + cr   = \int_{0}^{r} s(r-s) \tilde{\rho}(t,s) d s +cr   
\]
\[
\Longrightarrow \tilde{\phi}(t,r)= \int_{0}^r s\tilde{\rho}(t,s) d s -  \frac{1}{r} \int_{0}^r s^2 \tilde{\rho}(t,s)d s + c, \quad \Longrightarrow \p_r   \tilde{\phi}(t,r)= \frac{1}{r^2}  \int_{0}^r s^2 \tilde{\rho}(t,s)d s
\]
Hence, from the above equality and the conservation law in (\ref{conservationlaw}), the following estimate holds point-wisely,
\be\label{dec25eqn31}
\Longrightarrow \nabla_x\phi = \frac{x}{|x|}\p_r \tilde{\phi}(t, |x|)=\frac{x}{|x|^3} \int_{0}^{|x|}s^2 \tilde{\rho}(t, s)  d s,\quad \Longrightarrow \nabla_x\phi =\frac{x}{|x|}|\nabla_x\phi |, \quad |\nabla_x\phi |\lesssim \frac{1}{|x|^2}.
\ee
\end{proof} 
 From the   estimate (\ref{jan30eqn1}) in the above Lemma, we know that the acceleration force of particles is weak if the particles are far away from the origin.

In the next Lemma, we show that   the acceleration force of particles is not too strong even if the particles are very close  to the origin. 
\begin{lemma}\label{roughcontrol}
Let   $\delta\in (0,10^{-10})  $ be some fixed sufficiently small constant, then the following estimate holds for any $t\in [0, T^{\ast}), $
\be\label{jan31eqn11}
\|\nabla_x\phi(t,x)\|_{L^\infty_x} \lesssim  1+ \big( \tilde{M}_{n}^r(t) \big)^{  (5+\delta)/  ( (3-\delta)({n_r}-1)  )}.
\ee
\end{lemma}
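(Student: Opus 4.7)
The plan is to represent $\nabla_x\phi$ via the Newton potential: since $\Delta\phi=\rho$,
\[
|\nabla_x\phi(t,x)| \lesssim \int_{\R^3}\frac{\rho(t,y)}{|x-y|^2}\,dy,
\]
and to split the right-hand side into a near-field piece on $\{|x-y|\leq r\}$ and a far-field piece on $\{|x-y|>r\}$ for a radius $r>0$ to be optimized at the end. On each piece I will apply H\"older's inequality with a Lebesgue exponent for $\rho$ chosen so that $|x-y|^{-2}$ lies in the conjugate exponent: slightly above $3$ on the near part (for local integrability of $|y|^{-2p'}$) and slightly below $3$ on the far part (for integrability at infinity). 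Up to the $\delta$-slack reflected in the final bound, the natural choices are $p=(n_r+3)/3$ and $q=4/3$.

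The key technical ingredient is converting integrated velocity moments into $L^p_x$-control of $\rho$. Splitting $\rho(t,y)=\int f\,dv$ at $|v|=V$ and optimizing $V$ in terms of $\|f\|_{L^\infty_{x,v}}$ and $M_m(t,y)$ yields the familiar pointwise interpolation
\[
\rho(t,y) \lesssim \|f\|_{L^\infty_{x,v}}^{m/(m+3)}\, M_m(t,y)^{3/(m+3)}.
\]
Since $\|f(t,\cdot,\cdot)\|_{L^\infty_{x,v}}=\|f_0\|_{L^\infty_{x,v}}$ is conserved by \textup{(\ref{conservationlaw})}, integrating in $x$ gives $\|\rho(t,\cdot)\|_{L^{(m+3)/3}_x}\lesssim M_m(t)^{3/(m+3)}$. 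Taking $m=1$ and using the energy conservation, which forces $M_1(t)\lesssim 1$, I obtain $\|\rho(t,\cdot)\|_{L^{4/3}_x}\lesssim 1$; taking $m=n_r$ I obtain $\|\rho(t,\cdot)\|_{L^{(n_r+3)/3}_x}\lesssim M_{n_r}(t)^{3/(n_r+3)}\leq \tilde{M}_n^r(t)^{3/(n_r+3)}$.

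Substituting into the two H\"older estimates — using $\bigl\||\cdot|^{-2}\bigr\|_{L^{(n_r+3)/n_r}(B_r)}\sim r^{(n_r-6)/(n_r+3)}$ on the near part and $\bigl\||\cdot|^{-2}\bigr\|_{L^4(\{|\cdot|>r\})}\sim r^{-5/4}$ on the far part — leaves me with $|\nabla_x\phi|\lesssim \tilde{M}_n^r(t)^{3/(n_r+3)}\,r^{(n_r-6)/(n_r+3)} + r^{-5/4}$; balancing in $r$ (choosing $r\sim \tilde{M}_n^r(t)^{-4/(3(n_r-1))}$) gives the clean bound $|\nabla_x\phi|\lesssim 1+\tilde{M}_n^r(t)^{5/(3(n_r-1))}$, and replacing $p,q$ by slightly perturbed exponents produces the stated $(5+\delta)/((3-\delta)(n_r-1))$. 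The main obstacle is really just bookkeeping: the near-field H\"older step forces $p'<3/2$, i.e.\ $n_r>6$, so that $|\cdot|^{-2}$ is locally integrable to the power $(n_r+3)/n_r$, which is comfortably satisfied by the choice $N_r=100$ (hence $n_r=10$); one must also check that the two Lebesgue exponents can be simultaneously realized by the $v$-moment interpolation, which is transparent since $n_r\geq 1$. Beyond these consistency checks no genuinely new idea is required — the argument is the standard Lions--Perthame / Glassey--Schaeffer three-dimensional splitting.
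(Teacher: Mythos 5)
Your proposal is correct, and it is essentially the same argument as the paper's: a near/far splitting of the Newtonian kernel, H\"older on each piece, and the standard pointwise interpolation $\rho \lesssim \|f\|_{L^\infty_{x,v}}^{m/(m+3)} M_m(t,\cdot)^{3/(m+3)}$ converting velocity moments into $L^p_x$ control of $\rho$. The only differences are in the bookkeeping: the paper bounds the far field trivially by $\kappa^{-2}\|\rho\|_{L^1}\lesssim 1$ with $\kappa=1$ fixed, and on the near field uses $L^q$ with $q=(3-\delta)/(1-\delta)$, reaching $M_{n_r}$ through the intermediate moment $M_{6/(1-\delta)}$ and the interpolation $M_m(t)\lesssim M_{n_r}(t)^{(m-1)/(n_r-1)}$ (which uses the conserved bound on $M_1$); you instead take $m=1$ (so $\rho\in L^{4/3}$, giving the far-field $r^{-5/4}$) and $m=n_r$ (so $\rho\in L^{(n_r+3)/3}$) and optimize the splitting radius, which yields the marginally sharper exponent $5/(3(n_r-1))\le (5+\delta)/((3-\delta)(n_r-1))$, so the stated bound follows; your consistency check $n_r>6$ for local integrability of $|\cdot|^{-2p'}$ is indeed the only constraint and is satisfied since $n_r=10$.
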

\begin{proof}
 Note that
 \be\label{dec16eqn11}
\nabla_x\phi(x) = \int_{\R^3} \frac{\rho(t,y)(y-x)}{|x-y|^3} d y= \int_{|y-x|\leq \delta} \frac{\rho(t,y)(y-x)}{|x-y|^3} d y+ \int_{ |y-x|\geq \delta} \frac{\rho(t,y)(y-x)}{|x-y|^3} d y. 
\ee
From the conservation law in (\ref{conservationlaw}), the first part of the above equation is controlled as follows,
\be\label{dec16eqn2}
 \big| \int_{ |y-x|\geq \kappa } \frac{\rho(t,y)(y-x)}{|x-y|^3} d y\big|  \lesssim \frac{1}{\kappa^2}. 
\ee
 For the second part, we use the H\"older inequality by choosing  $ {p}=  ({3-\delta})/{2}$ and $  q =   ({3-\delta})/(1-\delta)$. As a result, we have 
\be\label{dec16eqn13}
\big|\int_{|y-x|\leq \kappa} \frac{\rho(t,y)(y-x)}{|x-y|^3} d y\big|\lesssim \big( \int_{|y-x|\leq \kappa}  \frac{1}{|y-x|^{2p}} dy   \big)^{1/p} \big(\int_{\R^n}  \big(\rho(t,y)\big)^{q}     dy   \big)^{1/q}\lesssim  \kappa^{2\delta/(3-\delta)}\|\rho(t,x)\|_{L^q}.
\ee

Since$\|f(t,x,v)\|_{L^\infty_{x,v}}$ and $M_1(t)$  is bounded all time from the conservation law in (\ref{conservationlaw}),  the following two estimates hold, 
\[
\rho(t,x)= \int_{\R^3} f(t,x,v) d v  \lesssim R^3 + R^{-6/(1-\delta)} M_{6/(1-\delta)}(t,x) \lesssim \big(M_{6/(1-\delta)}(t, x)\big)^{(1-\delta)/(3-\delta)}.
\]
Note that, for $m< n$, we have
\[
M_{m}(t)  
=\int_{ |v|\leq R} \int (1+ |v|)^{m  }  f(t,x,v) d x d v +  \int_{ |v|\geq R} \int (1+|v|)^{m }  f(t,x,v) d x d v  \]
\[
  \lesssim  R^{m-1} + R^{m-n} M_n(t)   \lesssim \big( M_n(t)\big)^{(m-1)/(n-1)}.
\]
 From the above two estimates, we have
\be\label{dec16eqn16}
\|\rho(t,x)\|_{L^q} \lesssim  \big(M_{n_r}(t) \big)^{  (5+\delta)/  ( (3-\delta)(n_r-1)  )}
\ee
Hence,  from the estimates (\ref{dec16eqn11}--\ref{dec16eqn16}), after letting $\kappa=1$,    we have
\be\label{dec25eqn32}
\|\nabla_x\phi(x)\|_{L^\infty_x} \lesssim  1+ \big(M_{n_r}(t) \big)^{  (5+\delta)/  ( (3-\delta)(n_r-1)  )}.
\ee
Hence finishing the proof of the desired estimate (\ref{jan31eqn11}). 
\end{proof}

Now, we study the evolution of the associated  characteristics of the RVP system (\ref{vlasovpo}). 
  From (\ref{vlasovpo}) and (\ref{characteristicseqn}), as a result of direct computations,  we have 
\be\label{jan12eqn21}
\displaystyle{\frac{d}{d s} | X(s )|   = \frac{X(s )}{|X(s )|}\cdot  \widehat{V}(s )}, \quad 
\displaystyle{\frac{d}{d s} | V(s )|   = \frac{V(s )\cdot  X(s )}{| V( s )|| X(s )|}  |\nabla_x\phi(X(s ))  }|.
\ee
From the above two equations, we can see that the quantity $V(s)\cdot  X(s)$ plays an essential role. As a result of direct computations, we have 
 \be\label{jan13eqn41}
  \frac{d}{d s} \frac{V(s )\cdot  X(s )}{   |{V}(s )|      }=|\hat{V}(s )| +  \frac{|X(s )|}{|V(s )|} \big(1 - \frac{\big(V(s )\cdot  X(s )\big)^2}{|  {V}(s )|^2|{X}(s )|^2}\big)|\nabla_x\phi(X(s ))| \geq 0,
 \ee
 From the above equation, we know that the quantity $V(s )\cdot  X(s )$ is an increasing function with respect to time ``$s$''.  With this observation, in the next Proposition, we control the possible size of  the majority set.

\begin{proposition}\label{majority}
For any $t\in[0,T^{\ast})$, the following relation holds for some sufficiently large absolute constant $C$,
\be\label{jan13eqn11}
R^r(t,t)\subset  B(0,  C(\tilde{M}^r_n(t))^{1/(2{n_r})})\times B(0, C  ( \tilde{M}^r_n(t)   )^{  (5+2\delta)/  ((6-2\delta)({n_r} -1)  ) }).
\ee
\end{proposition}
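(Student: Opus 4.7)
The plan is the following. For the position, since $|\hat{V}(s)| \leq 1$, integration of (\ref{characteristicseqn}) together with the definition (\ref{enlargedmoment}) immediately gives
\[
|X(t)| \leq |X(0)| + t \leq R_0 + t \leq 2R_0, \qquad R_0 := (\tilde{M}^r_n(t))^{1/(2n_r)},
\]
because $\tilde{M}^r_n(t) \geq (1+t)^{2n_r}$ forces $R_0 \geq 1+t$. This establishes the position part of (\ref{jan13eqn11}).

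For the velocity bound, the plan is to exploit the radial structure to reduce to a one-dimensional energy identity. Writing $\nabla_x\phi(s,x) = g(s,|x|)(x/|x|)$ with $g \geq 0$ (from (\ref{dec25eqn31})), a direct computation along the characteristic yields
\[
\frac{d}{ds}\sqrt{1+|V(s)|^2} = \frac{V\cdot \nabla_x\phi(X)}{\sqrt{1+|V|^2}} = g(s,|X(s)|)\cdot \frac{d|X(s)|}{ds},
\]
since both sides share the factor $(V\cdot \hat{X})/\sqrt{1+|V|^2}$. From (\ref{jan13eqn41}) (equivalently, $(d/ds)(V\cdot X) = |V|^2/\sqrt{1+|V|^2} + g|X| \geq 0$), the scalar product $V \cdot X$ is monotone non-decreasing, so each trajectory admits at most one turning time $s_0 \in [0,t]$ where the motion switches from inward ($V\cdot X<0$) to outward ($V\cdot X>0$). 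On the inward segment, $d|X|/ds\leq 0$ and the identity above forces $\sqrt{1+|V|^2}$ to decrease, whence $|V(s_0)| \leq |V(0)| \leq R_0$. On the outward segment, $|X|$ is non-decreasing, so a change of variable $r = |X|$ gives
\[
\sqrt{1+|V(t)|^2} - \sqrt{1+|V(s_0)|^2} \leq \int_{r_{\min}}^{|X(t)|} g(s(r),r)\, dr, \qquad r_{\min} := |X(s_0)|.
\]

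The main obstacle is controlling this integral \emph{uniformly in} $r_{\min}$, which may be arbitrarily close to zero because the radial case imposes no lower bound on the angular momentum. The plan is to combine the two complementary pointwise estimates on $g$: the decay bound $g(s,r)\leq C/r^2$ from (\ref{jan30eqn1}) and the rough bound $g(s,r) \leq A_{\max} := C\bigl(1 + (\tilde{M}^r_n(t))^{(5+\delta)/((3-\delta)(n_r-1))}\bigr)$ from Lemma \ref{roughcontrol}. Balancing the two regimes at $r \sim A_{\max}^{-1/2}$ yields the crucial uniform bound
\[
\int_0^{+\infty} \min\!\bigl(C/r^2,\, A_{\max}\bigr)\, dr \lesssim \sqrt{A_{\max}},
\]
independent of $r_{\min}$. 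Combining everything,
\[
|V(t)| \leq 1 + R_0 + C\sqrt{A_{\max}} \lesssim (\tilde{M}^r_n(t))^{1/(2n_r)} + (\tilde{M}^r_n(t))^{(5+\delta)/((6-2\delta)(n_r-1))},
\]
and an elementary comparison of exponents (using $n_r = N_r/10$ large) shows both terms are dominated by $(\tilde{M}^r_n(t))^{(5+2\delta)/((6-2\delta)(n_r-1))}$, yielding the claimed inclusion.
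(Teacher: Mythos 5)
Your proposal is correct, and it reaches the same quantitative threshold $\sqrt{A_{\max}}\sim(\tilde{M}^r_n(t))^{(5+\delta)/((6-2\delta)(n_r-1))}$ as the paper, but by a genuinely different mechanism. The paper runs a threshold/bootstrap argument: it defines the first time $\tau$ at which $|V|$ reaches the target size, uses the monotonicity of $V\cdot X/|V|$ (equation (\ref{jan13eqn41})) to get the quantitative lower bound $|X(s)|\gtrsim s-\tau$ after that time, and then integrates the field \emph{in time}, splitting $[\tau,\tau+\gamma]$ (rough bound of Lemma \ref{roughcontrol}) from $[\tau+\gamma,s]$ (decay bound (\ref{jan30eqn1}) via $|X|\gtrsim \kappa-\tau$) and optimizing $\gamma$. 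You instead exploit the exact radial work--energy identity $\frac{d}{ds}\sqrt{1+|V|^2}=g(s,|X|)\,\frac{d}{ds}|X|$, use the sign of $\frac{d}{ds}(V\cdot X)=|V|^2/\sqrt{1+|V|^2}+g|X|\ge 0$ to isolate a single turning time, and integrate the field bound \emph{in the radial variable}, where $\int_0^\infty\min(C/r^2,A_{\max})\,dr\lesssim\sqrt{A_{\max}}$ does the balancing uniformly in $r_{\min}$. This buys a cleaner argument: no bootstrap time, no continuity argument, and no need for the intermediate estimate (\ref{jan13eqn25}); the same two ingredients ((\ref{jan30eqn1}) and Lemma \ref{roughcontrol}), monotonicity, and the gradient structure $\nabla_x\phi=g\,x/|x|$ with $g\ge 0$ underpin both proofs. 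Two small presentational points: the bound of Lemma \ref{roughcontrol} must be applied at times $s\le t$ using that $\tilde{M}^r_n$ is nondecreasing (so $A_{\max}$ defined at time $t$ works uniformly on $[0,t]$), and since $|X(s)|$ is only nondecreasing (not necessarily strictly) on the outward leg, the substitution $r=|X(s)|$ is best phrased through an antiderivative, i.e. $\int_{s_0}^{t}h(|X(s)|)\frac{d|X|}{ds}\,ds=\int_{|X(s_0)|}^{|X(t)|}h(r)\,dr$ with $h(r)=\min(C/r^2,A_{\max})$, which is exactly what your final estimate uses; neither point is a gap.
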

\begin{proof}
Let $t\in[0,T^{\ast})$ be fixed.  
Note that, from the equation (\ref{jan12eqn21}),  the following rough estimate holds for the length of $X(s;t,x,v  )$,  
\be\label{jan13eqn31}
|X(s;  t,x,v )| \leq  |X(0;  t,x,v )| + |s|\leq 2 (\tilde{M}^r_n(t))^{1/(2{n_r})}, \quad s\in[0,t].
\ee

We define the maximal time such that the  velocity characteristic doesn't exceed   the threshold as follows, 
\[
\tau:=\sup\{s: s\in [0, t],\quad \forall \kappa\in[0,s],\, |V(   \kappa;t, {x},  {v})|\leq \big( \tilde{M}^r_n(t) \big)^{  (5+2\epsilon)/  ((6-2\epsilon)( {n_r}-1)  ) }    \}.
\]
From the continuity of characteristics, we know that $\tau>0$. If $\tau=t$, then  there is nothing left to be proved. It remains to consider the case when $0< \tau< t.$

Note that, from the equation (\ref{jan12eqn21}), we know that $  |V( s;t,x,v)|$ is decreasing if $V(s;t,x,v)\cdot  X(s;t,x,v)< 0$. Hence, at the time $\tau$, we have $V(\tau;t,x,v)\cdot  X(\tau;t,x,v)\geq 0$. Otherwise, it contradicts the definition of the 
maximal time.  From the monotonicity  of $V(s;t,x,v)\cdot  X(s;t,x,v)$, see the equation (\ref{jan13eqn41}), we know that $ V(s;t,x,v)\cdot  X(s;t,x,v)\geq 0$ for $s\in [\tau,t]$, which implies that $|V(t,s,x,v)|\geq |V(t,\tau,x,v)|$ for all $s\in[\tau, t]$ from the equation (\ref{jan12eqn21}). To sum up, $\forall s\in [\tau, t],$ we have
\be\label{jan13eqn43}
  |V(s;t,x,v)|\geq |V(\tau;t,x,v)|= \big( \tilde{M}^r_n(t) \big)^{  (5+2\epsilon)/  ((6-2\epsilon)({n_r}-1)  ) },\quad  V(s;t,x,v)\cdot  X(s;t,x,v)\geq 0.
\ee

Starting from the time $\tau$, from the above estimate and  the equation (\ref{jan13eqn41}), we have the following estimate for any $s\in[\tau, t],$
 \be\label{jan13eqn25}
 \frac{V(s;t,x,v)\cdot  X(s;t,x,v)}{|  {V}(s;t,x,v)|} =   \frac{V(\tau;t,x,v)\cdot  X(\tau;t,x,v)}{|  {V}(\tau;t,x,v)|} + \int_{\tau}^{s}\frac{d}{d s} \frac{V(\kappa;t,x,v)\cdot  X( \kappa;t,x,v)}{|  {V}(\kappa;t,x,v)|} d \kappa 
 \gtrsim s-\tau.
 \ee
 From the  equation   (\ref{jan12eqn21}) and the estimates (\ref{jan13eqn43}) and (\ref{jan13eqn25}), we have
 \[
  | X(s;t,x,v)|^2 -  | X(\tau;t,x,v)|^2  =\int_{\tau}^s \frac{d}{d s} | X(\kappa;t,x,v)|^2 d \kappa\gtrsim \int_{\tau}^s  (\kappa-\tau)d\kappa \gtrsim (s-\tau)^2. 
 \]
 From the above estimate, the estimate (\ref{dec25eqn32}), and the  equation in (\ref{characteristicseqn}), the following estimate holds for any $s\in[\tau, t]$,
 \[
  | V(s;t,x,v)|    \leq    | V (\tau;t,x,v)|+    \int_{\tau}^s   | \frac{d}{d s} V(\kappa;t,x,v)|  d \kappa  
   \lesssim \big( \tilde{M}^r_n(t) \big)^{  (5+2\delta)/  ((6-2\delta)({n_r}-1)  ) } +\int_{\tau}^{s}    |\nabla_x\phi(X(t,\kappa,x,v))  | d\kappa
 \]
 \[
 \lesssim \big( \tilde{M}^r_n(t) \big)^{  (5+2\delta)/  ((6-2\delta)(n-1)  ) } +\big|\int_{\tau}^{\tau+\gamma}\big( \tilde{M}^r_n(t) \big)^{  (5+\delta)/  ((3- \delta)({n_r}-1)  ) } d\kappa \big| + \big|\int_{\tau+\gamma}^s \frac{1}{(\kappa-\tau)^{ 2}} d \kappa\big|
 \]
\be\label{jan13eqn53}
 \lesssim \big( \tilde{M}^r_n(t) \big)^{  (5+2\delta)/  ((6-2\delta)({n_r}-1)  ) },\quad \textup{by letting}\quad \gamma =\big( \tilde{M}^r_n(t) \big)^{ - (5+\delta)/  ((6-2 \delta)({n_r}-1)  ) }. 
 \ee
 To sum up,     our desired conclusion (\ref{jan13eqn11}) holds from  (\ref{jan13eqn31}) and (\ref{jan13eqn53}). 
\end{proof}
\section{A space-time estimate for the distribution function in the cylindrical symmetry case}\label{spacetime}

 Our main goal  in this section is to  show a singular weighted space-time estimate for the distribution function in the cylindrical symmetry case, see Proposition  \ref{spacetimeestimate}.  Thanks to this estimate, we know that particles are not too concentrated near $z$-axis even without a lower bound for the angular momentum. 

As in the radial case, 
we also use the classic moment method, to control   the electric field over time. We define 
 \be\label{may2eqn1}
M _{ n_c }(t ):= \int_{\R^3} \int_{\R^3} (1+ |v|)^{n_c} f(t,x,v) d v d x,\quad    \tilde{M}^{c}_n(t) := (1+t)^{n_c^2}+\sup_{s\in[0,t]} M^{  }_{n_c}(s) , \quad n_c:= N_c/10 . 
 \ee
   Moreover,   for any fixed $t\in[0,T^{\star})$, where  $T^{\star}$ denotes the maximal time of existence, we define
\[
M_t: = \inf\{k: k \in \Z_+, 2^k \geq ( \tilde{M}^{c}_n(t) )^{1/(n_c-1)} \}. 
\]

Firstly, as preparation for proving   Proposition  \ref{spacetimeestimate},  we obtain some basic tools, which are point-wise estimates of the localized electric field. More precisely, for any $k\in \Z, j_2\in\Z_+, j_1\in [0,j_2+1]\cap \Z$, we define the localized electric field as follows, 
\begin{multline}\label{may7eqn32}
E_{k;j_1,j_2}(t,x): = \int_{\R^3} \int_{\R^3} \tilde{K}_k(y) f(t, x-y, v)\varphi_{j_1}(\slashed v ) \varphi_{j_2}(v) d y d v,   \\ 
\tilde{K}_k(y) := \int_{\R^3} e^{iy\cdot \xi } i \xi |\xi|^{-2} \varphi_k(\xi) d \xi,\quad  E(t,x)= \sum_{k, j_2\in \Z_+, j_1\in [0,j_2+1]\cap \Z} E_{k;j_1,j_2}(t,x), 
\end{multline}
where, by doing integration by parts in $\xi$ many times, the following estimate holds for the kernel  $\tilde{K}_k(y)$, 
\be\label{may7eqn51}
|\tilde{K}_k(y)|\lesssim 2^{2k}(1+2^k|y|)^{-N_c^5}.
\ee

To  exploit  the benefits of the cylindrical symmetry and the conservation law, we  have  following rough estimate for the electric field. 

\begin{lemma}\label{roughesti}
Let $\epsilon:=10/n_c$. For any fixed $t\in[0,T^{\star})$, the following rough estimate holds for the localized electric field, 
\be\label{may31eqn1}
\|E_{k;j_1,j_2}(t,x)\|_{L^\infty_x} \lesssim \min\{2^{-k+2j_1+j_2}, 2^{2k-j_2}, 2^{2k-n_c j_2}   \tilde{M}^{c}_n(t)\}, 
\ee
\be\label{june2eqn71}
 \|E_{ }(t,x)\|_{L^\infty_x} \lesssim  2^{5M_t/3+2\epsilon M_t}. 
\ee
Moreover, for any $x\in \R^3$ s.t., $|\slashed x|\neq 0$, we have the following point-wise estimate, 
\be\label{may31eqn2}
|E_{k;j_1,j_2}(t,x)|\lesssim 1+ \min\{ \frac{2^{j_1+\epsilon M_t}}{|\slashed x |^{1/2}}, \frac{2^{k-j_2+\epsilon M_t}}{|\slashed x |} \}.
\ee
\end{lemma}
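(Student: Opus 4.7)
The entire argument rests on rewriting (\ref{may7eqn32}) as a convolution
\[
E_{k;j_1,j_2}(t,x)=\int_{\R^3}\tilde K_k(y)\rho_{j_1,j_2}(t,x-y)\,dy,\qquad\rho_{j_1,j_2}(t,z):=\int_{\R^3} f(t,z,v)\varphi_{j_1}(\slashed v)\varphi_{j_2}(v)\,dv,
\]
and exploiting the two basic norms of the kernel that follow from (\ref{may7eqn51}): $\|\tilde K_k\|_{L^1_x}\lesssim 2^{-k}$ and $\|\tilde K_k\|_{L^\infty_x}\lesssim 2^{2k}$. The estimates in (\ref{may31eqn1}) and (\ref{june2eqn71}) then follow from Young-type inequalities for $\rho_{j_1,j_2}$, while the gain in $|\slashed x|$ in (\ref{may31eqn2}) comes from the cylindrical symmetry of $\rho_{j_1,j_2}$ together with a geometric arc-length calculation and a Lebesgue interpolation.

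For (\ref{may31eqn1}), pairing $\|\tilde K_k\|_{L^1}$ with $\|\rho_{j_1,j_2}\|_{L^\infty}\lesssim 2^{2j_1+j_2}$ (which follows from $\|f\|_{L^\infty_{x,v}}\lesssim 1$ together with the volume $\lesssim 2^{2j_1+j_2}$ of $\{v:|\slashed v|\sim 2^{j_1},|v|\sim 2^{j_2}\}$) gives the first bound. Pairing $\|\tilde K_k\|_{L^\infty}$ with $\|\rho_{j_1,j_2}\|_{L^1}\lesssim 2^{-j_2}$ (from $M_1(t)\lesssim 1$) gives the second, and with $\|\rho_{j_1,j_2}\|_{L^1}\lesssim 2^{-n_cj_2}\tilde M^c_n(t)$ (from the enlarged moment) gives the third. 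For (\ref{june2eqn71}) I would sum (\ref{may31eqn1}) over $k\in\Z$, $j_2\in\Z_+$, and $0\leq j_1\leq j_2+1$, splitting by $j_2\lessgtr M_t$. In the low regime $j_2\leq M_t$, the minimum of the first two bounds balances at $3k=2(j_1+j_2)$ with value $\lesssim 2^{4j_1/3+j_2/3}\lesssim 2^{5j_2/3}$, which sums to $\lesssim 2^{5M_t/3}$. In the high regime $j_2>M_t$, pairing the first and third bounds and optimizing in $k$ produces a sum that decays geometrically in $j_2$ due to the large exponent $n_c$, so it is dominated by its value at $j_2\sim M_t$, which by the defining relation $2^{M_t(n_c-1)}\gtrsim\tilde M^c_n(t)$ is again $\sim 2^{5M_t/3}$; the residual factor $2^{2\epsilon M_t}$ absorbs the slack between $\tilde M^c_n(t)$ and $2^{M_t(n_c-1)}$ as well as the logarithmic losses from the three summations.

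The pointwise estimate (\ref{may31eqn2}) is where cylindrical symmetry enters. The additive $1$ handles the regime $|\slashed x|\lesssim 2^{-k}$, in which (\ref{may31eqn1}) alone already gives an $O(1)$ contribution after modest summation. For $|\slashed x|\gtrsim 2^{-k}$, rotate so that $\slashed x=(|\slashed x|,0)$ and pass to cylindrical coordinates $z=(r\cos\theta,r\sin\theta,z_3)$, using the rapid decay in (\ref{may7eqn51}) to restrict to $|x-z|\lesssim 2^{-k}$. A direct computation shows that the intersection of the circle $\{|\slashed z|=r\}$ with the disk $\{|\slashed x-\slashed z|\lesssim 2^{-k}\}$ has arc length $\lesssim 2^{-k}\sqrt{r/|\slashed x|}$, so $\int d\theta\lesssim 2^{-k}/\sqrt{r|\slashed x|}$; since $r\sim|\slashed x|$ on the support, this contributes a factor $2^{-k}/|\slashed x|$. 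Collecting the $z_3$-integration and using $\|\tilde K_k\|_{L^\infty}\lesssim 2^{2k}$, one obtains
\[
|E_{k;j_1,j_2}(t,x)|\lesssim \frac{2^{k}}{|\slashed x|}\,I(t,x),\qquad I(t,x):=\int_{|r-|\slashed x||\lesssim 2^{-k}}\int_{|z_3-x_3|\lesssim 2^{-k}} r\,\tilde\rho_{j_1,j_2}(t,r,z_3)\,dz_3\,dr.
\]
Two natural bounds on $I$ then close the argument: globally, $I\lesssim\int r\,\tilde\rho_{j_1,j_2}\,dr\,dz_3\sim\int\rho_{j_1,j_2}\,dx\lesssim 2^{-j_2}$, which produces the second bound $2^{k-j_2}/|\slashed x|$; locally, $I\lesssim|\slashed x|\cdot 2^{-2k}\cdot 2^{2j_1+j_2}$ from $\|\tilde\rho_{j_1,j_2}\|_{L^\infty}\lesssim 2^{2j_1+j_2}$ and $r\sim|\slashed x|$ on the support. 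The geometric mean of these two bounds gives $I\lesssim|\slashed x|^{1/2}\,2^{j_1-k}$, so that multiplying by $2^k/|\slashed x|$ yields the first bound $2^{j_1}/|\slashed x|^{1/2}$; the extra factor $2^{\epsilon M_t}$ arises when one must upgrade $M_1$-based to $M_{n_c}$-based estimates in the velocity tails so that the argument is uniform in the full range of $j_2$. The main technical point to watch is the arc-length computation, which is what converts three-dimensional convolution decay into the one-dimensional improvement $|\slashed x|^{-1}$; the subsequent interpolation to $|\slashed x|^{-1/2}$ is a clean H\"older step, but it must be executed uniformly in $(k,j_1,j_2)$ while keeping the small losses $2^{\epsilon M_t}$ under control so that they can be absorbed by the bootstrap in later sections.
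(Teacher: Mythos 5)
Your proposal follows essentially the same route as the paper: the three bounds in (\ref{may31eqn1}) come from pairing $\|\tilde{K}_k\|_{L^1}\lesssim 2^{-k}$, $\|\tilde{K}_k\|_{L^\infty}\lesssim 2^{2k}$ (via (\ref{may7eqn51})) with the support volume $2^{2j_1+j_2}$, the conserved $M_1$, and the high moment; (\ref{june2eqn71}) follows by summing and balancing exactly as you do; and (\ref{may31eqn2}) is obtained, as in the paper, by truncating the kernel to $|y|\lesssim 2^{-k}$ up to $2^{\epsilon M_t}$ slack, using the cylindrical arc-length gain $2^{-k}/|\slashed x|$, and interpolating to get the $|\slashed x|^{-1/2}$ bound. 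The one point to correct is your treatment of the regime $|\slashed x|\lesssim 2^{-k}$: the additive $1$ does not cover it, and (\ref{may31eqn1}) does not give an $O(1)$ bound there (no summation is available in this fixed-$(k,j_1,j_2)$ statement). The regime is nevertheless immediate for a different reason: when $|\slashed x|\lesssim 2^{-k+\epsilon M_t/2}$ one has $\min\{2^{-k+2j_1+j_2},2^{2k-j_2}\}\leq 2^{j_1+k/2}\lesssim 2^{j_1+\epsilon M_t}|\slashed x|^{-1/2}$ and $2^{2k-j_2}\lesssim 2^{k-j_2+\epsilon M_t}|\slashed x|^{-1}$, so (\ref{may31eqn1}) already implies (\ref{may31eqn2}) without any gain from symmetry; with this one-line fix your argument is complete and coincides with the paper's.
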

\begin{proof}
Note that the desired estimate (\ref{may31eqn1}) holds straightforwardly from the estimate of kernel in (\ref{may7eqn51}), the volume of support of $v$,  and the conservation law (\ref{conservationlaw}). After summing up the obtained estimate (\ref{may31eqn1})  with respect to $k,j_1, j_2$, our desired rough estimate (\ref{june2eqn71}) holds for the electric field holds. 

Now, we focus on the proof of the desired estimate (\ref{may31eqn2}). Note that, (\ref{may31eqn2}) holds directly from (\ref{may31eqn1}) if $|\slashed x|\leq 2^{-k+\epsilon M_t/2}$. It would be sufficient to consider the case $|\slashed x|\geq 2^{-k+\epsilon M_t/2}$. From  the estimate of kernel in (\ref{may7eqn51}) and the cylindrical symmetry of solution, we have
\[
|E_{k;j_1,j_2}(t,x)|\lesssim 1+ 2^{2k} \int_{\R^3}\int_{|y|\leq 2^{-k+\epsilon M_t/10}} f(t, x-y, v)  \varphi_{j_1}(\slashed v ) \varphi_{j_2}(v) dy d v 
\]
\be\label{may31eqn3}
\lesssim 1+ 2^{k+\epsilon M_t} \int_{\R^3}\int_{\R^3} \frac{1}{|\slashed x|} f(t, x-y, v) \varphi_{j_1}(\slashed v ) \varphi_{j_2}(v)  dy d v \lesssim 1+ \frac{ 2^{k+\epsilon M_t-j_2}}{|\slashed x|}.
\ee
After optimizing the above estimate with the obtained estimate (\ref{may31eqn1}), our desired estimate (\ref{may31eqn2}) holds. 
\end{proof}

  \begin{proposition}\label{spacetimeestimate}
Let $\epsilon^{\star}:=\epsilon/100.$ For any $t\in [0, T^{\ast})$, s.t., $M_t\gg 1$, the following weighted space-time estimate holds,
 \be\label{jan12eqn36}
 A(t):= \int_0^t\int_{\R^3} \int_{\R^3}  \frac{| \slashed v|^{2+2\epsilon^{\star}}}{|\slashed x|^{1-2\epsilon^{\star}}\langle v \rangle }    f(s,x,v)  d x  d v d s \lesssim  2^{5\epsilon M_t}.
 \ee
 As a by-product, the following $L^1_tL^\infty_x$-type estimate holds for any $t_1, t_2\in[0,t]$, 
 \be\label{june9eqn10}
\int_{t_1}^{t_2} \| E_{k;j_1,j_2}(t,\cdot )\|_{L^\infty_x} d  t \lesssim 2^{2\epsilon M_t} + 2^{k-2j_1+j_2+6\epsilon M_t}. 
 \ee
\end{proposition}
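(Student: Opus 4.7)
The plan is to prove the weighted space-time estimate (\ref{jan12eqn36}) via a virial-type identity for the Vlasov equation with a cylindrically symmetric test function, and then derive the $L^1_tL^\infty_x$ byproduct (\ref{june9eqn10}) from it through the kernel representation of $E_{k;j_1,j_2}$.

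For (\ref{jan12eqn36}), I would use the test function
$$\psi(x,v):=\frac{\slashed x\cdot\slashed v}{|\slashed x|^{1-2\epsilon^{\star}}}\,\langle v\rangle^{2\epsilon^{\star}}.$$
Using the divergence-free structure of $(\hat v, E)$ in (\ref{vlasovpo}), one has $\frac{d}{ds}\int\psi f\,dx\,dv = \int(\hat v\cdot\nabla_x\psi+E\cdot\nabla_v\psi)f\,dx\,dv$. A direct computation using $\nabla_x(\slashed x\cdot\slashed v)=(\slashed v,0)$ and $(\slashed x\cdot\slashed v)^2\le|\slashed x|^2|\slashed v|^2$ yields
$$\hat v\cdot\nabla_x\psi\;\ge\;2\epsilon^{\star}\,\frac{|\slashed v|^{2}\langle v\rangle^{2\epsilon^{\star}-1}}{|\slashed x|^{1-2\epsilon^{\star}}}\;\ge\;2\epsilon^{\star}\,\frac{|\slashed v|^{2+2\epsilon^{\star}}}{|\slashed x|^{1-2\epsilon^{\star}}\langle v\rangle},$$
where the second step uses $|\slashed v|\le\langle v\rangle$. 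Integrating in $s\in[0,t]$ reduces the proof of (\ref{jan12eqn36}) to controlling the boundary $\int\psi f|_0^t$ and the error $\int_0^t\int|E\cdot\nabla_v\psi|f$. The boundary obeys $|\psi|\lesssim|\slashed x|^{2\epsilon^{\star}}\langle v\rangle^{1+2\epsilon^{\star}}$; bounding $|\slashed x|$ through the characteristic estimate $|\slashed X(s)|\le|\slashed X(0)|+s$, applying the Hölder interpolation $M_{1+2\epsilon^{\star}}(s)\lesssim M_{n_c}(s)^{2\epsilon^{\star}/(n_c-1)}$ together with conservation of $M_1$, and invoking (\ref{assumcylinrical}), produces a very small fractional power of $\tilde M^{c}_n(t)$, easily dominated by $2^{5\epsilon M_t}$.

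The delicate step is controlling $\int_0^t\int|E\cdot\nabla_v\psi|f$. A direct calculation yields $|E\cdot\nabla_v\psi|\lesssim|\slashed x|^{2\epsilon^{\star}}\langle v\rangle^{2\epsilon^{\star}}|E(s,x)|$. Since the crude bound (\ref{june2eqn71}) $\|E\|_{L^\infty_x}\lesssim 2^{5M_t/3+2\epsilon M_t}$ times the time interval is far too large for the target $2^{5\epsilon M_t}$, I would dyadically decompose $E=\sum E_{k;j_1,j_2}$ and use the refined pointwise bound (\ref{may31eqn2}), which after summation in $j_1,j_2,k$ yields an effective bound $|E|\lesssim 1+2^{(1+\epsilon)M_t}/|\slashed x|^{1/2}$. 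The non-singular part contributes a negligible amount after combining with conserved moments; the singular $|\slashed x|^{-1/2}$ part produces an integrand of the form $\tfrac{2^{(1+\epsilon)M_t}\langle v\rangle^{2\epsilon^{\star}}}{|\slashed x|^{1/2-2\epsilon^{\star}}}f$, which by Cauchy--Schwarz in $|\slashed x|$ splits as $A(t)^{1/2}$ times a lower-order moment bounded by $\tilde M^{c}_n(t)^{O(\epsilon^{\star}/n_c)}$. Choosing constants so that the prefactor of $A(t)^{1/2}$ is small enough permits absorption of the bad piece back into the left-hand side, closing a bootstrap and yielding $A(t)\lesssim 2^{5\epsilon M_t}$.

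For the byproduct (\ref{june9eqn10}), write $E_{k;j_1,j_2}(s,x)=(K_k*\rho_{j_1,j_2})(s,x)$ with $\rho_{j_1,j_2}(s,y):=\int f(s,y,v)\varphi_{j_1}(\slashed v)\varphi_{j_2}(v)\,dv$ and $|K_k(y)|\lesssim 2^{2k}(1+2^k|y|)^{-N_c^5}$. Split the $y$-integration at the threshold $|\slashed y|\sim 2^{-k+C\epsilon M_t}$. On the far-axis region $\{|\slashed y|\ge 2^{-k+C\epsilon M_t}\}$, the pointwise bound (\ref{may31eqn2}) integrated in time produces the $2^{2\epsilon M_t}$ piece. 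On the near-axis region, after inserting the factor $|\slashed y|^{1-2\epsilon^{\star}}\cdot|\slashed y|^{-(1-2\epsilon^{\star})}$, one invokes the just-proved (\ref{jan12eqn36}) restricted to the support $|\slashed v|\sim 2^{j_1},\,\langle v\rangle\sim 2^{j_2}$, which gives $\int_0^t\int\rho_{j_1,j_2}/|\slashed y|^{1-2\epsilon^{\star}}\lesssim 2^{5\epsilon M_t-(2+2\epsilon^{\star})j_1+j_2}$; pairing against $\|K_k\|_{L^\infty_y}\lesssim 2^{2k}$ and the threshold volume yields the $2^{k-2j_1+j_2+6\epsilon M_t}$ piece. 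The main obstacle is the absorption step in paragraph three: the budget $2^{5\epsilon M_t}$ is far smaller than any naive time-integral of a pointwise bound on $E$, so one must route the bad part of the error precisely through the near-axis decay (\ref{may31eqn2}) and close the bootstrap with an absorption constant strictly less than one; balancing this absorption is the technical heart of the argument.
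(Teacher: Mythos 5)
Your overall skeleton coincides with the paper's: a virial-type identity with a cylindrically symmetric weight whose transport derivative $\hat v\cdot\nabla_x$ produces the coercive quantity $\frac{|\slashed v|^{2+2\epsilon^\star}}{|\slashed x|^{1-2\epsilon^\star}\langle v\rangle}$, boundary terms controlled by moments and the initial decay, an error term involving $E$ that must be absorbed, and then (\ref{june9eqn10}) obtained by routing the near-axis part of the kernel representation of $E_{k;j_1,j_2}$ through (\ref{jan12eqn36}). Your weight $\psi=\frac{\slashed x\cdot\slashed v}{|\slashed x|^{1-2\epsilon^\star}}\langle v\rangle^{2\epsilon^\star}$ and the coercivity computation are fine, and are in fact simpler than the paper's weight $\omega_\mu$ (which is kept bounded at the price of the angular cutoffs $\phi_{-10M_t}$, a $\mu=\pm$ splitting, and a separate volume-of-support estimate for the near-orthogonal angular region).

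The genuine gap is in the treatment of the electric-field error term, which is the heart of the proposition. You bound $|E|\lesssim 1+2^{(1+\epsilon)M_t}|\slashed x|^{-1/2}$ and then apply Cauchy--Schwarz to split off $A(t)^{1/2}$. Two things go wrong. First, quantitatively: the resulting bad term is of size $2^{(1+\epsilon)M_t}\cdot(\text{moment})^{1/2}\cdot A(t)^{1/2}$, and the prefactor of $A(t)^{1/2}$ is exponentially large in $M_t$, not small; Young's inequality then only yields $A(t)\lesssim 2^{(2+2\epsilon)M_t}$, nowhere near the target $2^{5\epsilon M_t}$, and no choice of absolute constants repairs this. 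Second, the dual factor produced by your Cauchy--Schwarz necessarily carries the reciprocal of the $A(t)$-weight, i.e.\ a factor $\langle v\rangle/|\slashed v|^{2+2\epsilon^\star}$, and the negative moment $\int f/|\slashed v|^{2+2\epsilon^\star}$ is not controlled by anything available: the hypothesis (\ref{assumcylinrical}) only controls a negative moment of the conserved quantity $|\slashed x\times\slashed v|$, not of $|\slashed v|$. The paper closes the absorption differently: it decomposes $E=\sum E_{k;j_1',j_2'}$, proves the $L^1_tL^\infty_x$ bound (\ref{may13eqn32}) in which the $A(t)$-dependent part carries the factor $2^{k-2j_1'+j_2'}$, and pairs it against the localized $f$-piece measured in $L^\infty_tL^1_{x,v}$, which contributes $2^{-j_2}$ from conservation of $M_1$; in the dangerous regime $k\le 2j_1'-2\epsilon M_t$ (with $j_2'\le j_2$, and symmetrically with the roles of the two factors swapped via a change of variables when $j_2'\ge j_2$) the coefficient of $A(t)$ becomes $\le 2^{-\epsilon M_t}$, a genuinely small absorption constant, while the complementary regime $k\ge 2j_1'-2\epsilon M_t$ is handled by the rough fixed-time bound (\ref{may13eqn9}). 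This frequency-localized bilinear case analysis, which is what makes the coefficient of $A(t)$ small rather than exponentially large, is missing from your argument; your derivation of (\ref{june9eqn10}) from (\ref{jan12eqn36}) is essentially the paper's and would be fine once (\ref{jan12eqn36}) is actually established.
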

\begin{proof}
Define
\be\label{cutoff}
\phi(x):=\left\{\begin{array}{cc} 
2 &x\in [2^{},\infty)\\ 
 2+  (x-2)^3& x\in [1, 2]\\
x^3 & x\in [0,1)\\
0 & x\in (-\infty, 0]\\ 
\end{array}\right. ,\quad  \phi_l(x):= \phi(2^{-l}x).
\ee
From the above explicit formula of cutoff function, we have
\be\label{jan13eqn27}
\phi'(x)\geq 0, \quad \forall x\in (0, \infty),\big| \frac{x\phi'(x)}{\phi(x)}\big|\lesssim  1, \quad \phi_l'(x):=2^{-l}\phi'(2^{-l}x),\quad  \big| \frac{x\phi_l'(x)}{\phi_l(x)}\big|\lesssim  1.
\ee

For $\mu\in \{+,-\}$, we choose a weight function as follows,
 \be\label{may13eqn3}
\omega_{\mu}(x,v):= \big(\mu   |\slashed v | |\slashed x | \slashed x \cdot \slashed v \phi_{-10M_t}(\mu \frac{\slashed x\cdot \slashed v}{|\slashed x||\slashed v |}) +(\slashed x \times \slashed v )^2\big)^{\epsilon^{\star}} \phi_{ }\big(\mu (\frac{\slashed x\cdot \slashed v}{|\slashed x||\slashed v |}+ \frac{1}{2})\big)   . 
\ee
As a result of direct computation, we have
 \[
 \slashed v \cdot \nabla_{\slashed x} \omega_{\mu}(x,v)= \epsilon^{\star} \big(\mu   |\slashed v | |\slashed x | \slashed x \cdot \slashed v \phi_{-10M_t}(\mu \frac{\slashed x\cdot \slashed v}{|\slashed x||\slashed v |}) +(\slashed x \times \slashed v )^2\big)^{\epsilon^{\star}-1}
 \big[  \mu   |\slashed x |  |\slashed v|^3\big( 1+ \frac{(\slashed x \cdot \slashed v)^2}{|\slashed x |^2|\slashed v|^2}\big)   \phi_{-10M_t}(\mu \frac{\slashed x\cdot \slashed v}{|\slashed x||\slashed v |})  +    |\slashed v | |\slashed x | \slashed x \cdot \slashed v 
 \]
 \be\label{may13eqn1}
\times \phi_{-10M_t}'(\mu \frac{\slashed x\cdot \slashed v}{|\slashed x||\slashed v |})   \frac{ ( \slashed x \times \slashed v)^2}{|\slashed x |^3|\slashed v| } \big] + \big(\mu   |\slashed v | |\slashed x | \slashed x \cdot \slashed v \phi_{-10M_t}(\mu \frac{\slashed x\cdot \slashed v}{|\slashed x||\slashed v |}) +(\slashed x \times \slashed v )^2\big)^{\epsilon^{\star}} \phi'\big(\mu (\frac{\slashed x\cdot \slashed v}{|\slashed x||\slashed v |}+ \frac{1}{2})\big)  \frac{ \mu ( \slashed x \times \slashed v)^2}{|\slashed x |^3|\slashed v| }.
 \ee
 From the above equality, we have
 \be\label{may13eqn2}
  \mu  \slashed v \cdot \nabla_{\slashed x} \omega_{\mu}(x,v)\gtrsim  \frac{ |\slashed x |  |\slashed v|^3  \phi_{-10M_t}(\mu \frac{\slashed x\cdot \slashed v}{|\slashed x||\slashed v |}) }{\big(\mu   |\slashed v | |\slashed x | \slashed x \cdot \slashed v \phi_{-10M_t}(\mu \frac{\slashed x\cdot \slashed v}{|\slashed x||\slashed v |}) +(\slashed x \times \slashed v )^2\big)^{ 1-\epsilon^{\star}}}\gtrsim  \frac{|\slashed v|^{1+2\epsilon^{\star}}}{|\slashed x|^{1-2\epsilon^{\star}}}\phi_{-10M_t}(\mu \frac{\slashed x\cdot \slashed v}{|\slashed x||\slashed v |}) \gtrsim  0. 
 \ee
Let
\be\label{may23eqn3}
I_{\mu}(t):= \int_{\R^3} \int_{\R^3}|\slashed v|  \omega_{\mu}(x,v) f(t,x,v) d xd v. 
\ee
As a result of direct computation, we have
\[
\frac{d }{dt} I_{\mu}(t)= \int_{\R^3} \int_{\R^3}|\slashed v|  \hat{v}\cdot\nabla_x\big( \omega_{\mu}(x,v)\big) f(t,x,v) d xd v +\int_{\R^3} \int_{\R^3} E(t,x)\cdot\nabla_v\big(|\slashed v|  \omega_{\mu}(x,v)\big) f(t,x,v) d xd v.
\]
From the above equality and the estimate (\ref{may13eqn2}), we have
\[
 \int_{0}^t \int_{\R^3} \int_{\R^3} \frac{|\slashed v|^{2+2\epsilon^{\star}}}{|\slashed x|^{1-2\epsilon^{\star}}\langle v \rangle }\phi_{-10M_t}(\mu \frac{\slashed x\cdot \slashed v}{|\slashed x||\slashed v |}) f(s,x,v) d  x d v d s 
\]
\be\label{may13eqn71}
\lesssim |I_\mu(t) | + |I_\mu(0) | + \big| \int_{0}^t\int_{\R^3} \int_{\R^3} E(s,x)\cdot\nabla_v\big(|\slashed v|  \omega_{\mu}(x,v)\big) f(s,x,v) d xd v d s \big|.
\ee
Note that, from the estimate (\ref{may13eqn3}) and the conversation law (\ref{conservationlaw}), we have
\be\label{may13eqn72}
\forall s\in [0,t], \quad |I_\mu(s) |\lesssim 1+  \int_{|x|\leq 2^{ M_t/5}} \int_{|v|\leq 2^{2M_t}}|\slashed v|  \omega_{\mu}(x,v) f(t,x,v) d xd v\lesssim 2^{ \epsilon M_t}. 
\ee
From the volume of support, we have
\[
 \int_{0}^t \int_{\R^3} \int_{\R^3} \frac{|\slashed v|^{2+2\epsilon^{\star}}}{|\slashed x|^{1-2\epsilon^{\star}}\langle v \rangle }\big(1- \phi_{-10M_t}( \frac{\slashed x\cdot \slashed v}{|\slashed x||\slashed v |}) -\phi_{-10M_t}( -\frac{\slashed x\cdot \slashed v}{|\slashed x||\slashed v |}) \big) f(s,x,v) d  x d v d s 
\]
\[
\lesssim 1+  \int_{0}^t \int_{|x|\leq 2^{ M_t/5}} \int_{|v|\leq 2^{2M_t}}  \frac{|\slashed v|^{2+2\epsilon^{\star}}}{|\slashed x|^{1-2\epsilon^{\star}}\langle v \rangle }\big(1- \phi_{-10M_t}( \frac{\slashed x\cdot \slashed v}{|\slashed x||\slashed v |}) -\phi_{-10M_t}( -\frac{\slashed x\cdot \slashed v}{|\slashed x||\slashed v |}) \big) f(s,x,v)d xd v d s\]
\be\label{may13eqn73}
\lesssim 1 + 2^{-10M_t + 8 M_t + 10\epsilon M_t}\lesssim 1. 
\ee

Lastly, we estimate the contribution from the nonlinear effect in (\ref{may13eqn71}). Recall  (\ref{may13eqn3}).  As a result of direct computation,  we have
 \be\label{may13eqn6}
  \big|\nabla_v \omega_{\mu}(x,v)\big|\lesssim  \frac{|\slashed x|^2 |\slashed v |}{|\slashed x |^{2-2\epsilon^{\star}} |\slashed v|^{2-2\epsilon^{\star} }}= \frac{|\slashed x|^{2\epsilon^{\star}} }{  |\slashed v|^{1-2\epsilon^{\star} }}. 
\ee

 After localizing the sizes of $|\slashed v |$ and $|v|$ and localizing the electric field, from the estimate (\ref{may13eqn6}), we have
\[
\big| \int_{0}^t\int_{\R^3} \int_{\R^3} E(s,x)\cdot\nabla_v\big(|\slashed v|  \omega_{\mu}(x,v)\big) f(s,x,v) d xd v d s \big|\lesssim \sum_{k\in \Z, j_2\in \Z_+, j_1\in [0, j_2+2]\cap \Z} H^{j_1,j_2}_{k;j_1',j_2'}(t),
\]
where
\be\label{may13eqn31}
H^{j_1,j_2}_{k;j_1',j_2'}(t): =  \int_{0}^t\int_{\R^3} \int_{\R^3} |\slashed x|^{2\epsilon^{\star} } |\slashed v|^{2\epsilon^{\star}} |E_{k;j_1',j_2'}(s,x)| f(s,x,v) \psi_{j_1}(\slashed v) \psi_{j_2}(v) d x d v d  s. 
\ee
 By using the volume of support of $u$ and the estimate of   the kernel  $\tilde{K}_k(y)$ in (\ref{may7eqn51}), we have
 \be\label{may13eqn9}
|E_{k;j_1',j_2'}(s,x)|\lesssim \min\{2^{-k+2j_1'+j_2'}, 2^{2k-j_2'}\}.
\ee

From the above estimate, we can rule out the case when $k\geq  4M_t$ and the case $j_2'\geq 2M_t$ or $j_2\geq 2M_t$. Since there are at most $M_t^3$ cases left, it would be sufficient to let $k, j_1,j_2,j_1',j_2'$ all be fixed. 

Based on the relative size of $j_2$ and $j_2'$, we separate into two cases as follows.

\noindent $\bullet$\qquad If $j_2'\leq j_2$.

Note that, from the cylindrical symmetry and  the estimate of   the kernel  $\tilde{K}_k(y)$ in (\ref{may7eqn51}), we have
\[
|E_{k;j_1',j_2'}(s,x)|\lesssim 2^{\epsilon M_t} + \int_{\R^3}\int_{|y|\leq 2^{-k+\epsilon M_t/10}}  \min\{\frac{2^{k +\epsilon M_t/2}}{|\slashed x-y |}, 2^{2k}\}    f(s,x-y, u)  \varphi_{j_1'}(\slashed u ) \varphi_{j_2'}(u) d y d u. 
\]
Therefore, from the above estimate and the definition of $A(t)$ in (\ref{jan12eqn36}),  we have
\be\label{may13eqn32}
\int_0^t \|E_{k;j_1',j_2'}(s,x)\|_{L^\infty_x} ds  \lesssim  2^{2\epsilon M_t} +2^{k+\epsilon M_t -2j_1'+j_2'} A(t). 
\ee
From the above $L^1_t L^\infty_x$-type estimate and the $L^1_t L^\infty_x$-$L^\infty_t L^1_x$ type bilinear estimate, the following estimate holds if $k\leq 2j_1'-2\epsilon M_t$, 
 \be\label{may13eqn8}
|H^{j_1,j_2}_{k;j_1',j_2'}(t)|\lesssim  2^{3\epsilon M_t} +2^{k+\epsilon M_t-2j_1'+j_2'-j_2} A(t)\lesssim 2^{3\epsilon M_t} + 2^{-\epsilon M_t} A(t). 
\ee
If $k\geq 2j_1'-2\epsilon M_t$, then from the rough estimate of the localized electric field in (\ref{may13eqn9}), we have
 \be\label{may13eqn10}
|H^{j_1,j_2}_{k;j_1',j_2'}(t)|\lesssim 2^{-k+2j_1'+j_2'+ 3\epsilon M_t} 2^{-j_2}\lesssim 2^{5\epsilon M_t}
\ee
To sum up, in whichever case, from the estimates (\ref{may13eqn8}) and (\ref{may13eqn10}), we have
\[
|H^{j_1,j_2}_{k;j_1',j_2'}(t)|\lesssim  2^{5\epsilon M_t} + 2^{-\epsilon M_t} A(t). 
\]
 
 \noindent $\bullet$\qquad If $j_2'\geq j_2$.

Recall (\ref{may13eqn31}). Note that, after changing coordinates $x\longrightarrow x+y$,  we have
\[
\big|H^{j_1,j_2}_{k;j_1',j_2'}(t)\big|\lesssim 2^{5\epsilon M_t}  \int_{0}^t\int_{\R^3} \int_{\R^3} K_k(y)  f(s,x-y, u) f(s,x,v) \varphi_{j_1'}(\slashed u) \varphi_{j_2'}(u) \varphi_{j_1}(\slashed v) \varphi_{j_2}(v) dy  d x d u d v d  s 
\]
\[
\lesssim 2^{5\epsilon M_t}  \int_{0}^t\int_{\R^3} \int_{\R^3}    f(s,x, u) \widetilde{E}_{k;j_2,j_2}(s,x) \varphi_{j_1'}(\slashed u) \varphi_{j_2'}(u)   d x d u  d  s,
\]
where 
\[
\widetilde{E}_{k;j_2,j_2}(s,x):=  \int_{\R^3} \int_{\R^3} K_k(y)f(s,x+y,v)\varphi_{j_1}(\slashed v) \varphi_{j_2}(v) dy d v.
\]

Similar to the obtained estimates (\ref{may13eqn32}) and  (\ref{may13eqn8}), the following estimate holds if $k\leq 2j_1-2\epsilon M_t$,
\[
|H^{j_1,j_2}_{k;j_1',j_2'}(t)|\lesssim  2^{3\epsilon M_t} +2^{k+\epsilon M_t-2j_1+j_2-j_2'} A(t)\lesssim 2^{3\epsilon M_t} + 2^{-\epsilon M_t} A(t).
\]
Note that, from the estimate of   the kernel  $\tilde{K}_k(y)$ in (\ref{may7eqn51})  and the conservation law (\ref{conservationlaw}), we have
\[
|\widetilde{E}_{k;j_2,j_2}(s,x)|\lesssim 2^{-k+2j_1+j_2}. 
\]
From the above estimate, we have the following estimate holds if $k\geq 2j_1-2\epsilon M_t$,
\[
|H^{j_1,j_2}_{k;j_1',j_2'}(t)|\lesssim 2^{-k+2j_1+j_2+3\epsilon M_t-j_2'}\lesssim 2^{5\epsilon M_t}.
\]
To sum up, we have
\[
\big| \int_{0}^t\int_{\R^3} \int_{\R^3} E(s,x)\cdot\nabla_v\big(|\slashed v|  \omega_{\mu}(x,v)\big) f(s,x,v) d xd v d s \big|\lesssim 2^{5\epsilon M_t} + 2^{- \epsilon M_t} A(t). 
\]
Combining the above estimate with the estimates (\ref{may13eqn71}--\ref{may13eqn73}), we have
\[
A(t)\lesssim 2^{5\epsilon M_t} + 2^{- \epsilon M_t} A(t) , \quad \Longrightarrow \qquad  A(t)\lesssim 2^{5\epsilon M_t}.
\]
Hence finishing the proof of our desired estimate (\ref{jan12eqn36}).   The desired estimate (\ref{june9eqn10}) holds directly from the obtained estimates (\ref{jan12eqn36}) and (\ref{may13eqn32}). 
\end{proof}

\section{Control of the majority set in the cylindrical symmetry case}\label{controlmajority}

    Now,  we define    a majority set of particles in  the cylindrical symmetry case at time $s$ as follows,  
\be\label{may10majority}
R^{cyl}  (t,s):= \{(X(s;t,x,v),V(s;t,x,v)): | X( 0;t,x,v) |+|V( 0;t,x,v))|\leq  2^{ M_t/2} \}.
\ee
Moreover, to better capture the size of    velocity characteristics  $|  V(s;t,x,v)|$, we define 
\be\label{may9en21}
 \beta^{}_t(x,v):=\sup_{s\in[0,t] } \inf\{k\in \mathbb{R}_{+}, | V(s;t,x,v)|\leq 2^{k M_t^{   }}  \}, \quad \beta_t:=\sup\{\beta^{ }_t(x,v): (x,v)\in R^{cyl}(t,t)   \}.
\ee

 Based on the estimates of electric field obtained in the previous  section, we aim to proving the proposition for the majority set defined in (\ref{may10majority}), which is the core of the proof of Theorem \ref{maintheorem}.
 \begin{proposition}\label{majorityset}
Let $\epsilon :=10/n_c. $ For any $t\in [0,T^{\star})$, s.t., $M_t\gg 1,$ we  have
\be\label{may11eqn61}
 \beta_t\leq 1-\epsilon/2.    
\ee
\end{proposition}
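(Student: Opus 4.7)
The plan is a contradiction argument. Suppose $\beta_t>1-\epsilon/2$; then by (\ref{may9en21}) and (\ref{may10majority}), there exist $(x_0,v_0)\in R^{cyl}(t,t)$ together with $s_\ast\in[0,t]$ such that $|V(s_\ast;t,x_0,v_0)|\geq 2^{(1-\epsilon/2)M_t}$, while $|V(0;t,x_0,v_0)|\leq 2^{M_t/2}$. By the characteristic equation (\ref{characteristicseqn}) it suffices to show
\[
\Bigl|\int_0^{s_\ast}\nabla_x\phi(X(\tau;t,x_0,v_0))\,d\tau\Bigr|\lesssim 2^{(1-\epsilon)M_t},
\]
since for $M_t\gg 1$ this is incompatible with the above lower bound. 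I would split the field as $\nabla_x\phi=\sum_{k,j_1,j_2}E_{k;j_1,j_2}$ according to (\ref{may7eqn32}), restrict to $k\leq 4M_t$ and $j_2\leq 2M_t$ using Lemma 3.1 (as in the proof of Proposition 3.1), and treat the pieces according to the relative size of $k,j_1,j_2$.

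For the \emph{low/moderate frequency regime}, where $k-2j_1+j_2\leq (1-10\epsilon)M_t$, the $L^1_tL^\infty_x$ estimate (\ref{june9eqn10}) from Proposition 3.1 already gives $\int_0^{s_\ast}\|E_{k;j_1,j_2}(\tau,\cdot)\|_{L^\infty_x}\,d\tau\lesssim 2^{(1-9\epsilon)M_t}$, and summing over $O(M_t^3)$ triples is harmless. For the complementary \emph{high-frequency regime} the rough bound is far too weak and a smoothing effect has to be extracted from the time integration. Following the philosophy used by Klainerman-Staffilani for Vlasov-Maxwell, I would write
\[
E_{k;j_1,j_2}(\tau,X(\tau))=\int_{\R^3}\!\!\int_{\R^3}e^{iX(\tau)\cdot\xi}\frac{i\xi}{|\xi|^2}\varphi_k(\xi)\widehat{f}(\tau,\xi,u)\varphi_{j_1}(\slashed u)\varphi_{j_2}(u)\,d\xi\,du,
\]
and dyadically decompose $\xi$ according to the time-resonance factor $|\hat V(\tau)\cdot\xi|$. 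On the non-resonant sector $|\hat V(\tau)\cdot\xi|\gtrsim 2^{k-\epsilon^{\star}M_t}$, the identity $\tfrac{d}{d\tau}e^{iX(\tau)\cdot\xi}=i\hat V(\tau)\cdot\xi\,e^{iX(\tau)\cdot\xi}$ allows a normal-form integration by parts in $\tau$, which recovers a factor $2^{-k+\epsilon^{\star}M_t}$ and thereby tames the $k$- and $j_2$-summations. The Vlasov equation produces, after the normal form, a $\tau$-derivative of $\widehat f$ that can be written as an $E$-dependent quadratic error; this can be reabsorbed via the same $L^1_tL^\infty_x$ estimate, while the endpoint terms at $\tau\in\{0,s_\ast\}$ are dominated by the pointwise bounds of Lemma 3.1.

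The principal obstacle is the resonant sector $\{|\hat V(\tau)\cdot\xi|\ll 2^{k-\epsilon^{\star}M_t}\}$ together with the companion region of small relativistic velocity $|\hat V(\tau)|\ll 1$, for which the normal form is either unavailable or too lossy. Here cylindrical symmetry enters decisively: writing $\xi=(\slashed\xi,\xi_3)$ and using that $\hat V$ inherits the cylindrical geometry of $V$, the condition $\hat V\cdot\xi\approx 0$ localizes $\xi$ to a thickened $2$-plane whose $\xi$-volume is smaller by a factor $2^{-\epsilon^{\star}M_t}$ than the full shell $|\xi|\sim 2^k$. The worst geometric configuration, in which $\slashed V$ is nearly parallel to $\slashed X$ (so that the conserved angular momentum does not force $|\slashed X|$ to be large), is precisely the one suppressed by the singular weighted space-time estimate (\ref{jan12eqn36}) of Proposition 3.1: the integrand $|\slashed v|^{2+2\epsilon^{\star}}/(|\slashed x|^{1-2\epsilon^{\star}}\langle v\rangle)$ blows up in exactly that regime and thus forces $f$ to deposit only a controlled amount of mass there. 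Combining the resonant estimate coming from (\ref{jan12eqn36}) with the non-resonant gain from the normal form, and adding the low-frequency contribution, yields the required bound on the time integral, contradicting $|V(s_\ast)|\geq 2^{(1-\epsilon/2)M_t}$ and hence giving $\beta_t\leq 1-\epsilon/2$.
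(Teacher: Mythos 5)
Your overall architecture (localize the field as $E=\sum E_{k;j_1,j_2}$, dispose of the non-degenerate triples with the rough bounds and the $L^1_tL^\infty_x$ estimate (\ref{june9eqn10}), then extract a smoothing gain by a normal form in time on the remaining high-frequency set) is the same philosophy as the paper, but two of your key steps have genuine gaps. First, you identify the time resonance as $\hat V(\tau)\cdot\xi$ and integrate by parts using $\frac{d}{d\tau}e^{iX(\tau)\cdot\xi}=i\hat V(\tau)\cdot\xi\,e^{iX(\tau)\cdot\xi}$ while keeping $\widehat f(\tau,\xi,u)$ as the amplitude. Then $\partial_\tau\widehat f(\tau,\xi,u)$ is \emph{not} an $E$-dependent quadratic error: it contains the linear transport term $-i\hat u\cdot\xi\,\widehat f(\tau,\xi,u)$, which is of size $2^{k}$ and exactly cancels the factor $2^{-k}$ you just gained (the particles in the dangerous triples are relativistic, $|\hat u|\approx|\hat V|\approx 1$, so there is no separation of scales between $\hat V\cdot\xi$ and $\hat u\cdot\xi$). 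The paper avoids this by passing to the profile $g(s,x,v)=f(s,x+s\hat v,v)$, so the phase is $e^{iX(s)\cdot\xi-is\hat v\cdot\xi}$ and the true resonance factor is $(\hat V(s)-\hat v)\cdot\xi=|\hat V(s)-\hat v|\,\theta_{V(s)}(v)\cdot\tilde\xi$; the lower bound (\ref{april28eqn13}) then requires cutoffs both in $|\tilde v-\tilde V(s)|$ (the $l_1$, $n$ parameters) and in $\theta_{V(s)}(v)\cdot\tilde\xi$ (the $l_2$ parameter), which is substantially more delicate than a cutoff in $\hat V\cdot\xi$ alone, and it is also why the bootstrap window $[\zeta,\zeta^\star]$ with $|V(s)|\sim 2^{(1-\epsilon)\beta^{\dagger}M_t}$ is set up before these kernels are estimated; your direct estimate over $[0,s_\ast]$ does not provide that uniform control of $V(s)$.

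Second, your treatment of the resonant/near-axis region is not the mechanism that actually closes the argument. The singular space-time estimate (\ref{jan12eqn36}) is used (through (\ref{june9eqn10}) and the cylindrical-symmetry kernel bounds) for pieces where $|\slashed X(s)|$ is not too small, e.g. via (\ref{2022jan26eqn41}) and the azimuthal-angle cutoff $\varphi_{l_1,\alpha}$ with the volume bound (\ref{april29eqn1}). When $|\slashed X(s)|$ is genuinely small (the characteristic is close to the $z$-axis), the paper instead relies on the conserved singular angular-momentum moment $J(s)=\int|\slashed x\times\slashed v|^{-13}\psi_{\geq-100M_t}(\slashed x\times\slashed v)f\,dx\,dv$, finite precisely because of the vanishing assumption in (\ref{assumcylinrical}); this is how Lemmas \ref{roughlemma1} and \ref{roughlemma2} bound $E_{k;j_1,j_2}(s,X(s))$ on the sets $\psi_{<2j_1-2M_t+15\epsilon M_t}(\slashed X(s))$ and $\psi_{\leq(M_t-17j_1)/13-2\epsilon M_t}(\slashed X(s))$. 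Your sketch never invokes this hypothesis, yet without it the near-axis contribution is not controlled (and indeed the assumption appears in Theorem \ref{maintheorem} exactly for this purpose). Until these two points are repaired -- the correct resonance via the free-flow profile, and the near-axis control via the inverse angular-momentum moment together with the $\alpha$-angular cutoff -- the proposal does not yield the bound $\lesssim 2^{(1-\epsilon)M_t}$ on the time-integrated field in the critical regime $k\approx 2j_1$, $j_2\approx M_t$.
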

\begin{proof}

We aim to prove the following estimate 
\be\label{may11eqn11}
 \sup_{s\in [0, t], } |\  V(s)|\leq 2^{ (1-\epsilon)   \beta^{\dagger}  M_t+1}, \quad \beta^{\dagger}:=\max\{\beta_t, 1\}. 
\ee
Define $\zeta$ be the first time that the speed of particle  reaches a threshold as follows 
\be\label{may11eqn12}
\zeta:=\sup \{ s\in [0, t]: \forall \kappa\in [0, s],\, | V(\kappa)|\leq 2^{ (1-\epsilon)  \beta^{\dagger} M_t} \}.
\ee
From the continuity of the speed characteristics, we know that $\zeta\in (0, t]$. Since our desired   estimate (\ref{may11eqn11}) holds straightforwardly if $\zeta=t$, we only have to consider the case $\zeta\in (0, t)$.  Let
\be\label{may11eqn13}
\zeta^{\star}:=\sup\{ s: s\in[\zeta,t], \forall \kappa\in [\zeta, s],  | V(\kappa)|^{ }\in   2^{ (1-\epsilon)  \beta^{\dagger}  M_t} [99/100, 101/100] \}. 
\ee 
Note that, from (\ref{characteristicseqn}), we have
\[
\frac{d}{ds} |V(s)| = \tilde{V}(s)\cdot E(s, X(s)). 
\]
Recall the decomposition of the electric field in (\ref{may7eqn32})  From the estimate (\ref{june1eqn1}) in Lemma \ref{roughlemma1}, the estimate (\ref{june1eqn21}) in Lemma \ref{roughlemma2}, and the estimate (\ref{june1eqn30}) in Lemma \ref{corelemma1}, we know that for any $s\in [\zeta, t]$, we have
\[
|V(s)|\leq |V(\zeta)| + 2^{(1-2\epsilon)M_t} \leq  2^{ (1-\epsilon)  \beta^{\dagger} M_t}  + 2^{(1-2\epsilon)M_t}\leq (1+2^{-1000}) 2^{ (1-\epsilon)  \beta^{\dagger} M_t}.
\] 
Hence improving the bootstrap assumption, which implies that our desired estimate (\ref{may11eqn11}) is true. Recall the definition of $\beta_t$ in (\ref{may9en21}). We have
\[
2^{\beta_t M_t}\leq  2^{ (1-\epsilon)   \beta^{\dagger}  M_t+1}, \quad \Longrightarrow \beta_t M_t\leq (1-\epsilon) M_t + 10, \quad \Longrightarrow  \beta_t\leq (1-\epsilon/2). 
\]
Hence finishing the proof of our desired estimate (\ref{may11eqn61}).

\end{proof}

\begin{lemma}\label{roughlemma1}
For any  $s_1, s_2\in [\zeta, \zeta^{\star}]\subset [0,t]$, the following estimate holds, 
\be\label{june1eqn1}
\sum_{(k,j_1,j_2)\in \mathcal{B}^c}\big|\int_{s_1}^{s_2} \tilde{V}(s)\cdot  E_{k;j_1, j_2}(s, X(s)) d s \big| \lesssim 2^{(1-2\epsilon )M_t},
\ee
where the index set $\mathcal{B} $ is defined as follows, 
\be\label{june1eqn9}
\mathcal{B}:=\{(k,j_1,j_2): k,j_1,j_2\in \Z_+,  k\in[2j_1-4\epsilon M_t, 2j_1+4\epsilon M_t], j_2\in [(1-5.5\epsilon) M_t,(1+\epsilon)M_t],  j_1\geq 5M_t/8+10\epsilon M_t \}.
\ee

\end{lemma}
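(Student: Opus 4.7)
The plan is to partition $\mathcal{B}^c$ into sub-regions according to which of the three defining conditions of $\mathcal{B}$ is violated, and in each sub-region to take the minimum of the appropriate pointwise bound from Lemma~\ref{roughesti} and the $L^1_t L^\infty_x$ bound from Proposition~\ref{spacetimeestimate}. A first useful observation is that the length of the time interval is small: from $(1+t)^{n_c^2} \leq \tilde{M}^c_n(t) \leq 2^{(n_c-1)M_t}$ and $\epsilon = 10/n_c$, one obtains $|s_2-s_1|\leq t \lesssim 2^{\epsilon M_t/10}$, so the trivial pointwise-times-time bound only loses a small factor $2^{\epsilon M_t/10}$ from the time length.

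First I would dispose of the large-$j_2$ regime $j_2 \geq (1+\epsilon)M_t$ via the third pointwise bound in (\ref{may31eqn1}): together with $\tilde{M}^c_n(t) \leq 2^{(n_c-1)M_t}$ and $n_c\epsilon = 10$, this yields exponential decay of order $2^{-9M_t}$, more than enough to absorb the triple sum over $(k, j_1, j_2)$. For the non-resonant $k$ regime, when $k > 2j_1 + 4\epsilon M_t$, the first pointwise bound $2^{-k+2j_1+j_2}$ decays geometrically in $k$; after summing the geometric series together with the time factor $2^{\epsilon M_t/10}$ and with the effective caps $j_1 \leq j_2+1$ (from the Littlewood--Paley structure in (\ref{may7eqn32})) and $j_2 \leq (1+\epsilon)M_t$, the result is $\lesssim 2^{(1-3\epsilon)M_t}$. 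When $k < 2j_1 - 4\epsilon M_t$, the $L^1_t L^\infty_x$ bound (\ref{june9eqn10}) carries the non-resonance gain $2^{k-2j_1}$ in its second term, so summing it in $k$ gives a convergent geometric series, while its first term $2^{2\epsilon M_t}$ loses only a factor of $M_t$ upon summation; the very small $k$ sub-range, where the $L^1_t$ bound ceases to be sharp, is handled by the second pointwise bound $2^{2k-j_2}$.

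In the resonant band $|k - 2j_1| \leq 4\epsilon M_t$, it remains to handle $j_2 < (1-5.5\epsilon)M_t$ and $j_1 < 5M_t/8 + 10\epsilon M_t$. For small $j_2$, the first pointwise bound $2^{-k+2j_1+j_2} \sim 2^{j_2}$ at $k \sim 2j_1$ is already $\leq 2^{(1-5.5\epsilon)M_t}$; multiplied by the time factor and summed over the $O(\epsilon^2 M_t^2)$ resonant pairs it stays well below the target. For small $j_1$ with $j_2$ in the critical range, the crude pointwise and $L^1_t L^\infty_x$ bounds are individually insufficient, so I would interpolate between them and invoke the sharper $|\slashed x|$-dependent bound (\ref{may31eqn2}) together with the weighted space-time estimate (\ref{jan12eqn36}) of Proposition~\ref{spacetimeestimate} to capture the fact that particles cannot remain too concentrated near the $z$-axis. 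The main obstacle I anticipate is precisely this small-$j_1$ resonant sub-case: maintaining the Littlewood--Paley constraint $j_1 \leq j_2+1$ and finding the optimal trade-off between all the available bounds, so that the triple sum does not accumulate beyond the target $2^{(1-2\epsilon)M_t}$.
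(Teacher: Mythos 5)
There is a genuine gap, and it sits exactly where you flagged it: the resonant band $k\in[2j_1-4\epsilon M_t,2j_1+4\epsilon M_t]$ with $j_2\in[(1-5.5\epsilon)M_t,(1+\epsilon)M_t]$ and $j_1\leq 5M_t/8+10\epsilon M_t$. Your first three regimes (large $j_2$, non-resonant $k$, small $j_2$) are handled essentially as in the paper, using (\ref{may31eqn1}) and (\ref{june9eqn10}). But for the critical sub-case the tools you propose to ``interpolate'' cannot close the estimate: (\ref{may31eqn1}) and (\ref{june9eqn10}) give at best $2^{\min\{4j_1-j_2,\,j_2\}+O(\epsilon M_t)}$ and $2^{j_2+O(\epsilon M_t)}$ respectively, which exceed the target $2^{(1-2\epsilon)M_t}$ once $j_1\gtrsim M_t/2$ and $j_2\gtrsim(1-5.5\epsilon)M_t$; and the $|\slashed x|$-dependent bound (\ref{may31eqn2}) degenerates as $|\slashed X(s)|\to 0$, while nothing in (\ref{jan12eqn36}) prevents the characteristic from approaching the $z$-axis. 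Indeed (\ref{jan12eqn36}) has already been spent in deriving (\ref{june9eqn10}), so no additional gain is extracted from it by ``interpolation.''

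The paper's proof of this case uses an ingredient you never invoke: the hypothesis (\ref{assumcylinrical}) that the singular angular-momentum moment $\int\int |\slashed x\times\slashed v|^{-13}f_0\,dx\,dv$ is finite. One propagates the quantity $J(s)=\int\int|\slashed x\times\slashed v|^{-13}\psi_{\geq-100M_t}(\slashed x\times\slashed v)f(s,x,v)\,dx\,dv$, which is conserved along the flow (the angular momentum is constant on characteristics), hence $J(s)\lesssim 1$. Then one splits according to whether $|\slashed X(s)|\geq 2^{2j_1-2M_t+15\epsilon M_t}$ or not: the large-$|\slashed X(s)|$ part is controlled by (\ref{may31eqn2}), and in the near-axis region one observes that on the support of $\varphi_{j_1}(\slashed v)$ with $|\slashed X(s)-\slashed y|$ small, the factor $|(\slashed X(s)-\slashed y)\times\slashed v|^{-13}$ is bounded below by roughly $2^{-13j_1}(2^{-k}+2^{2j_1-2M_t})^{-13}$, so the finiteness of $J(s)$ yields the pointwise bound $|E_{k;j_1,j_2}(s,X(s))|\lesssim 1+2^{43j_1-26M_t+220\epsilon M_t}$ there; this is where the threshold $j_1\leq 5M_t/8+10\epsilon M_t$ in the definition of $\mathcal{B}$ is used, and it is precisely why that cutoff appears. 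Without bringing in this conserved singular moment (or some substitute exploiting the vanishing of $f$ at zero planar angular momentum), the triple sum in the critical regime cannot be brought below $2^{(1-2\epsilon)M_t}$, so the proposal as written does not prove the lemma.
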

\begin{proof}
We split possible scenarios of $(k,j_1,j_2)\in \mathcal{B}^c$ into four cases as follows.

\noindent $\bullet$\qquad If $j_2\geq (1+\epsilon ) M_t $. 

Recall (\ref{may7eqn32}). From the rough  estimate of the localized electric field (\ref{may31eqn1}) in Lemma \ref{roughesti}, we have
\[
\sum_{k\in \Z_+, j_2\geq (1+\epsilon ) M_t } \big|\int_{s_1}^{s_2} \tilde{V}(s)\cdot  E_{k;j_1, j_2}(s, X(s)) d s \big|\lesssim \sum_{k\in \Z_+,j_2\geq (1+\epsilon) M_t } \min\{2^{-k+3j_2}, 2^{2k-nj_2}   \tilde{M}^{c}_n(t)  \}
\]
\be\label{2022jan26eqn1}
\lesssim   \sum_{k\in \Z_+,j_2\geq (1+\epsilon) M_t } \min\{2^{-k+3j_2}, 2^{2k-10j_2} \} \lesssim   \sum_{k\in \Z_+,j_2\geq (1+\epsilon) M_t }  2^{-k/100-j_2/100}\lesssim 1. 
\ee

\noindent $\bullet$\qquad If $k\notin[2j_1- 4\epsilon M_t, 2j_1+4\epsilon M_t]$ and $j_2\leq (1+\epsilon )M_t$.

From the obtained estimate (\ref{may13eqn32}) and the rough estimate of the localized electric field (\ref{may31eqn1}) in Lemma \ref{roughesti}, we have
\[
\sum_{k\in \Z_+,k\notin[2j_1-4\epsilon M_t, 2j_1+4\epsilon M_t] }\big|\int_{s_1}^{s_2} \tilde{V}(s)\cdot  E_{k;j_1, j_2}(s, X(s)) d s  \big|  \]
\be\label{2022jan26eqn2}
 \lesssim \sum_{k\in \Z_+,k\notin[2j_1-4\epsilon M_t, 2j_1+4\epsilon M_t] } 2^{\epsilon M_t} \min\{2^{k-2j_1+j_2},2^{-k+2j_1+j_2} \} 
 \lesssim  2^{j_2-3\epsilon M_t}\lesssim 2^{(1-2\epsilon )M_t}.
\ee
\noindent $\bullet$\qquad If $k\in[2j_1-4\epsilon M_t, 2j_1+4\epsilon M_t]$, $j_2\leq (1-5.5\epsilon) M_t$

From the   estimate (\ref{june9eqn10}) in Proposition \ref{spacetimeestimate}  and the rough estimate of the localized electric field (\ref{may31eqn1}) in Lemma \ref{roughesti}, we have
\[
\big|\int_{s_1}^{s_2} \tilde{V}(s)\cdot  E_{k;j_1, j_2}(s, X(s)) d s \big| \lesssim  \int_{s_1}^{s_2}\| E_{k;j_1, j_2}(s, \cdot)\|_{L^\infty_x} d s \lesssim 2^{5\epsilon M_t} +  \min\{2^{k-2j_1+j_2+6\epsilon M_t},2^{-k+2j_1+j_2+\epsilon M_t} \}\]
\be
\lesssim  2^{j_2+3.5\epsilon M_t }\lesssim 2^{(1-2\epsilon )M_t}.
\ee
\noindent $\bullet$\qquad If $k\in[2j_1-4\epsilon M_t, 2j_1+4\epsilon M_t]$, $j_2\in [ (1-5.5\epsilon) M_t, (1+\epsilon) M_t]$, $j_1\leq 5M_t/8+10\epsilon M_t. $

We first rule out the case $|\slashed X(s)|$ is relatively large. From the rough estimate (\ref{may31eqn2}) in Lemma \ref{roughesti}, we have
\be\label{june1eqn6}
 \big|\int_{s_1}^{s_2}\tilde{V}(s)\cdot  E_{k;j_1, j_2}(s, X(s)) \psi_{\geq 2j_1-2M_t+15\epsilon M_t}(\slashed X(s)) d s \big| \lesssim 2^{10\epsilon M_t} +  2^{k-j_2+2\epsilon M_t} 2^{2M_t-2j_1-15\epsilon M_t}\lesssim  2^{(1-3\epsilon) M_t}.
\ee

Now, it  remains to consider the case when  $|\slashed X(s)| $ is relatively small. 
Let
\[
J(s)=\int_{\R^3}\int_{\R^3} |\slashed x \times \slashed v |^{-13} \psi_{\geq  -100 M_t }(\slashed x\times \slashed v ) f(s,x,v) d x d v. 
\]
From our assumption on the initial data, see (\ref{assumptiononinitialdata}), we know  that $0\leq  J(0)\lesssim 1. $ As a result of direct computation, we have
\be\label{june1eqn11}
\frac{d}{ds} J(s) = \int_{\R^3}\int_{\R^3} |\slashed x \times \slashed v |^{-13} \psi_{\geq  -100M_t }(\slashed x\times \slashed v ) \p_sf(s,x,v) d x d v=0, \quad \Longrightarrow 0\leq J(s)= J(0)\lesssim 1. 
\ee
From the above estimate of $J(s)$, we have
\[
|E_{k;j_1,j_2}(s,X(s))|\psi_{<  2j_1-2M_t+15\epsilon M_t}(\slashed X(s))\]
\[
 \lesssim 1+  2^{2k} \int_{\R^3} \int_{|y|\leq 2^{-k+\epsilon M_t/10}} f(s,X(s)-y,v) \psi_{<  2j_1-2M_t+15\epsilon M_t}(\slashed X(s))\varphi_{j_1}(\slashed v ) \varphi_{j_2}(   v ) d y d v 
\]
\[
\lesssim  1+  2^{2k} \int_{\R^3} \int_{|y|\leq 2^{-k+\epsilon M_t/10}} f(s,X(s)-y,v) \psi_{\geq-100 M_t}( (\slashed X(s)-\slashed y)\times \slashed v ) \psi_{<  2j_1-2M_t+15\epsilon M_t}(\slashed X(s))\varphi_{j_1}(\slashed v ) \varphi_{j_2}(   v ) d y d v
\]
\[
+ 2^{2k} \int_{\R^3} \int_{|y|\leq 2^{-k+\epsilon M_t/10}} f(s,X(s)-y,v) \psi_{ < -100 M_t}( (\slashed X(s)-\slashed y)\times \slashed v ) \psi_{<  2j_1-2M_t+15\epsilon M_t}(\slashed X(s))\varphi_{j_1}(\slashed v ) \varphi_{j_2}(   v ) d y d v
\]
\[
\lesssim 1 + 2^{2k} (2^{-k+\epsilon M_t/10}+ 2^{2j_1-2M_t+15 \epsilon M_t})^{13} 2^{13 j_1} \int_{\R^3} \int_{|y|\leq 2^{-k+\epsilon M_t}} \big|(\slashed X(s) - \slashed y)\times \slashed v  \big|^{-13} 
\]
\[ 
\times \psi_{\geq-100 M_t}( (\slashed X(s)-\slashed y)\times \slashed v )f(s,X(s)-y,v) d y d v  + 2^{2k+3j_2-100M_t +10\epsilon M_t}
\]
\be\label{2022jan26eqn3}
\lesssim 1+ 2^{43j_1 -26 M_t+220\epsilon M_t}\lesssim 2^{(1-10\epsilon)M_t}. 
\ee
Hence   our desired estimate (\ref{june1eqn1}) holds from the above obtained estimates (\ref{2022jan26eqn1}), (\ref{2022jan26eqn2}), (\ref{june1eqn6}), and   (\ref{2022jan26eqn3}).
\end{proof}
\begin{lemma}\label{roughlemma2}
For any  $(k,j_1,j_2)\in \mathcal{B}$, see \textup{(\ref{june1eqn9})}, the following estimate holds for any $s_1, s_2\in [\zeta, \zeta^{\star}]\subset[0,t]$,
\be\label{june1eqn21}
\big|\int_{s_1}^{s_2}\tilde{V}(s)\cdot  E_{k;j_1, j_2}(s, X(s))
\big(\psi_{\geq 2j_1-2M_t+15\epsilon M_t}(\slashed X(s)) + \psi_{\leq (M_t-17j_1)/13-2\epsilon M_t}(\slashed X(s))\big)  d s \big| \lesssim 2^{(1-3\epsilon )M_t}.
\ee
\end{lemma}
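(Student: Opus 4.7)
The plan is to mimic the treatment of the ``fourth case'' in the proof of Lemma \ref{roughlemma1} (i.e., $k\in[2j_1-4\epsilon M_t,2j_1+4\epsilon M_t]$, $j_2\in[(1-5.5\epsilon)M_t,(1+\epsilon)M_t]$) but now for the complementary regime $j_1\geq 5M_t/8+10\epsilon M_t$ that characterizes the index set $\mathcal{B}$. The two cutoffs in the statement are calibrated so that, in each region, exactly one of the two tools from Section~\ref{spacetime} applies: the pointwise decay estimate \eqref{may31eqn2} for the large-$|\slashed X(s)|$ piece, and the conservation of the angular-momentum moment $J(s)$ established in \eqref{june1eqn11} for the small-$|\slashed X(s)|$ piece. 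Throughout, the crude time-length bound $s_2-s_1\leq t\lesssim 2^{\epsilon M_t/10}$, which follows from $(1+t)^{n_c^2}\leq \tilde{M}^c_n(t)\lesssim 2^{(n_c-1)M_t}$ together with the calibration $n_c=N_c/10$, converts pointwise-in-$s$ bounds on the integrand into time-integrated bounds.

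For the piece cut off by $\psi_{\geq 2j_1-2M_t+15\epsilon M_t}(\slashed X(s))$, I would insert \eqref{may31eqn2} directly into the characteristic integral. The decay factor $2^{k-j_2+\epsilon M_t}/|\slashed X(s)|$ is controlled, using $k\leq 2j_1+4\epsilon M_t$ and $j_2\geq(1-5.5\epsilon)M_t$ from $\mathcal{B}$, by $2^{(1-4.5\epsilon)M_t}$ pointwise, while the residual ``$1$'' in \eqref{may31eqn2} contributes at most $2^{\epsilon M_t/10}$ after time integration. Together this produces $\lesssim 2^{(1-4.4\epsilon)M_t}\leq 2^{(1-3\epsilon)M_t}$, exactly along the lines of \eqref{june1eqn6} of Lemma~\ref{roughlemma1}.

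For the piece cut off by $\psi_{\leq (M_t-17j_1)/13-2\epsilon M_t}(\slashed X(s))$, I would repeat the derivation of \eqref{2022jan26eqn3} with the sharper spatial cutoff. Localizing the kernel $\tilde{K}_k$ to $|y|\leq 2^{-k+\epsilon M_t/10}$, a direct exponent comparison shows that throughout $\mathcal{B}$ (in particular at the boundary $j_1=5M_t/8+10\epsilon M_t$) the $|\slashed X(s)|$-cutoff dominates $|y|$, so that $|\slashed X(s)-\slashed y|\lesssim 2^{(M_t-17j_1)/13-2\epsilon M_t}$. Inserting the bookkeeping identity $1\leq |(\slashed X(s)-\slashed y)\times\slashed v|^{-13}\,|\slashed X(s)-\slashed y|^{13}\,|\slashed v|^{13}$, and separating off the volume-negligible region where $|(\slashed X(s)-\slashed y)\times\slashed v|<2^{-100M_t}$, the remaining $f$-integral is bounded by $J(s)\lesssim 1$. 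Combining $2^{2k}\leq 2^{4j_1+8\epsilon M_t}$ with the factors $|\slashed X(s)-\slashed y|^{13}\lesssim 2^{M_t-17j_1-26\epsilon M_t}$ and $|\slashed v|^{13}\lesssim 2^{13j_1}$ produces the pointwise bound $\lesssim 2^{(1-18\epsilon)M_t}$, which survives time integration with room to spare.

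The main obstacle is the tight exponent bookkeeping: the threshold $(M_t-17j_1)/13$ in the small-$|\slashed X(s)|$ cutoff is calibrated precisely so that, at the worst boundary value $j_1=5M_t/8+10\epsilon M_t$ where the argument of Lemma~\ref{roughlemma1} just ceases to apply, the $J$-based estimate still closes with margin $\geq 3\epsilon M_t$ below $M_t$. An unlocalized-in-$x$ bound via \eqref{june9eqn10} would not work in $\mathcal{B}$, since its right-hand side is already $\sim 2^{(1+O(\epsilon))M_t}$; the spatial localization of $|\slashed X(s)|$ provided by the two cutoffs is genuinely essential, and checking that every intermediate inequality remains favorable under the range of admissible ratios $k/(2j_1)$ and $j_2/M_t$ on $\mathcal{B}$ is where the technical care lies.
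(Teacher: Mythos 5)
Your proposal is correct and follows essentially the same route as the paper: the large-$|\slashed X(s)|$ piece is handled exactly as in the bound (\ref{june1eqn6}) via the pointwise decay (\ref{may31eqn2}) together with the crude time-length bound, and the small-$|\slashed X(s)|$ piece is handled, as in (\ref{2022jan26eqn3}), by localizing the kernel to $|y|\lesssim 2^{-k+\epsilon M_t/10}$, inserting $1\lesssim |(\slashed X(s)-\slashed y)\times\slashed v|^{-13}|\slashed X(s)-\slashed y|^{13}|\slashed v|^{13}$ away from the negligible region of tiny angular momentum, and invoking the conserved moment $J(s)\lesssim 1$ from (\ref{june1eqn11}). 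Your exponent bookkeeping ($2^{(1-4.4\epsilon)M_t}$ and $2^{(1-18\epsilon)M_t}$, both below $2^{(1-3\epsilon)M_t}$) is consistent with the paper's.
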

\begin{proof}
By using the same argument as in the obtained estimate (\ref{june1eqn6}), we have
\[
 \big|\int_{s_1}^{s_2} \tilde{V}(s)\cdot  E_{k;j_1, j_2}(s, X(s)) \psi_{\geq 2j_1-2M_t+15\epsilon M_t}(\slashed X(t)) d s \big| \lesssim 1 +  2^{k-j_2+2\epsilon M_t} 2^{2M_t-2j_1-15\epsilon M_t}\lesssim  2^{(1-3\epsilon) M_t}.
\]
It remains to consider the case $|\slashed X(t)|\lesssim  2^{ (M_t-17j_1)/13-2\epsilon M_t} $.  From the estimate (\ref{june1eqn11}), we have
\[
|E_{k;j_1,j_2}(s,X(s))| \psi_{\leq (M_t-17j_1)/13-2\epsilon M_t}(\slashed X(s))\]
\[
\lesssim 1+  2^{2k} \int_{\R^3} \int_{|y|\leq 2^{-k+\epsilon M_t/10}} f(s,X(s)-y,v)  \psi_{\leq (M_t-17j_1)/13-2\epsilon M_t}(\slashed X(s)) d y d v 
\]
\[
\lesssim 1 + 2^{2k} (2^{-k+\epsilon M_t/10}+ 2^{(M_t-17j_1)/13-2\epsilon M_t} )^{13} 2^{13 j_1} \int_{\R^3} \int_{|y|\leq 2^{-k+\epsilon M_t}}  |(\slashed x - \slashed y)\times \slashed v   |^{-13}\]
\[
\times \psi_{\geq-100 M_t}( (\slashed X(s)-\slashed y)\times \slashed v )  f(s,X(s)-y,v) d y d v  + 2^{2k+3j_2-100M_t+10\epsilon M_t}\lesssim 2^{(1-5\epsilon)M_t}.
\]
Hence finishing the proof of our desired estimate (\ref{june1eqn21}).

\end{proof}
\begin{lemma}\label{corelemma1}
For any  $(k,j_1,j_2)\in \mathcal{B}$, see \textup{(\ref{june1eqn9})}, the following estimate holds for any $s_1, s_2\in [\zeta, \zeta^{\star}]\subset[0, t]$,
\be\label{june1eqn30}
\big|\int_{s_1}^{s_2} \tilde{V}(s)\cdot  E_{k;j_1, j_2}(s, X(s))  \psi_{[  (M_t-17j_1)/13-2\epsilon M_t, 2j_1-(2 -15\epsilon) M_t ]}(\slashed X(s))  d  s \big| \lesssim 2^{(1-3\epsilon )M_t}.
\ee
\end{lemma}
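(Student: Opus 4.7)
The plan is to extract a smoothing effect for the time integral of the electric field along characteristics, via a normal form (integration by parts in $s$) keyed to the Fourier representation of $E_{k;j_1,j_2}$. In the middle range of $|\slashed X(s)|$, neither the near-axis bound used in $(\ref{2022jan26eqn3})$ nor the far-axis bound used in $(\ref{june1eqn6})$ is sharp, and the $L^1_tL^\infty_x$ estimate $(\ref{june9eqn10})$ only delivers $2^{(1+6\epsilon)M_t}$ on $\mathcal{B}$; the desired bound $2^{(1-3\epsilon)M_t}$ must therefore come from cancellation in the time integral.

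First I would write, via Fourier inversion,
\be
\tilde V(s)\cdot E_{k;j_1,j_2}(s,X(s)) = \int_{\R^3} \tilde V(s)\cdot \frac{i\xi}{|\xi|^2}\, \varphi_k(\xi)\, e^{iX(s)\cdot\xi}\, \hat{\rho}_{j_1,j_2}(s,\xi)\, d\xi,
\ee
with $\rho_{j_1,j_2}(s,x):=\int f(s,x,v)\varphi_{j_1}(\slashed v)\varphi_{j_2}(v)\,dv$, and exploit the identity $\tfrac{d}{ds}e^{iX(s)\cdot\xi} = i\hat V(s)\cdot\xi\, e^{iX(s)\cdot\xi}$. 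Decomposing $\xi$ dyadically according to the angle between $\xi$ and $V(s)$ at a threshold $|\hat V(s)\cdot\xi|\sim 2^{k-m^\star}$ with $m^\star\sim \epsilon M_t$, I split the integral into a non-resonant region and a time-resonant slab. In the bootstrap window $[\zeta,\zeta^\star]$ the direction of $V(s)$ is essentially frozen, since $|\partial_s V(s)|\lesssim \|E\|_{L^\infty_x}$ and $|V(s)|\sim 2^{(1-\epsilon)\beta^{\dagger} M_t}$ by construction.

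On the non-resonant piece I perform integration by parts in $s$; the reciprocal factor $1/(\hat V(s)\cdot\xi)$ produces a smoothing factor of order $2^{m^\star-k}$. The boundary terms at $s_1,s_2$ become pointwise evaluations of a kernel one order more regular than $\tilde K_k$, and can be absorbed by rerunning the angular-momentum argument of $(\ref{2022jan26eqn3})$ using $J(s)\lesssim 1$ from $(\ref{june1eqn11})$, since $|\slashed X(s)|$ lies in the prescribed middle range. The bulk term from differentiating the amplitude in $s$ produces (i) a piece involving $\partial_s \hat V(s)$, controlled through $E(s,X(s))$ by the rough pointwise bound $(\ref{june2eqn71})$, and (ii) a piece involving $\partial_s \hat\rho_{j_1,j_2}(s,\xi)$, which via the Vlasov equation $(\ref{vlasovpo})$ equals $-i\xi\cdot\widehat{J_{j_1,j_2}}(s,\xi)+\widehat{R_{j_1,j_2}}(s,\xi)$, where $J_{j_1,j_2}(s,x):=\int \hat v f(s,x,v)\varphi_{j_1}(\slashed v)\varphi_{j_2}(v)dv$ is a localized current controlled by the conservation law, and $R_{j_1,j_2}$ is the commutator of $E\cdot\nabla_v$ with the velocity cutoffs; the latter is handled by pairing the pointwise bound on $E$ with the space-time weighted estimate $(\ref{jan12eqn36})$. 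On the time-resonant slab the $\xi$-support has Fourier volume $\lesssim 2^{3k-m^\star}$, which combined with the angular-momentum bound on $\rho_{j_1,j_2}$ restricted to the middle-range slab for $|\slashed X(s)|$ contributes at most $2^{(1-5\epsilon)M_t}$. Optimizing $m^\star$ between the non-resonant smoothing gain and the resonant volume loss yields the claimed $2^{(1-3\epsilon)M_t}$, after summing over the $O(M_t)$ dyadic values of $|\slashed X(s)|$ in the prescribed range.

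The main obstacle will be the bulk contribution from $\partial_s \hat\rho_{j_1,j_2}$: opening the Vlasov equation turns it into a genuinely quadratic-in-$f$ term with the electric field differentiated in $v$ against the cutoffs $\varphi_{j_1}(\slashed v)\varphi_{j_2}(v)$, and closing it forces one to combine $(\ref{jan12eqn36})$ with a careful tracking of the resonant geometry. Unlike the Vlasov--Maxwell setting of Klainerman--Staffilani, where the speed gap between field and matter makes non-resonance transparent, here field and particles propagate at comparable speeds, so the entire gain must come from the angular misalignment of $\xi$ and $V(s)$. The constraint $j_1\geq 5M_t/8+10\epsilon M_t$ built into $\mathcal{B}$ is exactly what makes the angular-momentum bound $(\ref{june1eqn11})$ strong enough to absorb both the boundary and the resonant contributions at the chosen threshold $m^\star$.
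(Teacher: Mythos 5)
Your normal form is keyed to the wrong phase, and this is a fatal structural gap rather than a technical one. You write the field as an oscillatory integral with phase $e^{iX(s)\cdot\xi}$ and amplitude $\hat{\rho}_{j_1,j_2}(s,\xi)$, and integrate by parts in $s$ using $\tfrac{d}{ds}e^{iX(s)\cdot\xi}=i\hat V(s)\cdot\xi\,e^{iX(s)\cdot\xi}$. But then the bulk term contains $\partial_s\hat{\rho}_{j_1,j_2}(s,\xi)=-i\xi\cdot\widehat{J_{j_1,j_2}}(s,\xi)+\widehat{R_{j_1,j_2}}(s,\xi)$, and the free-transport piece $\xi\cdot\widehat{J_{j_1,j_2}}$ carries a factor $|\xi|\sim 2^k$ while the conservation laws give $\widehat{J_{j_1,j_2}}$ no better bounds than $\hat{\rho}_{j_1,j_2}$ itself. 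In your non-resonant region $|\hat V(s)\cdot\xi|\sim 2^{k-m^\star}\le 2^{k}$, so the gain $|\hat V(s)\cdot\xi|^{-1}$ is at best $2^{-k}$, which is exactly cancelled (and for $m^\star>0$ overwhelmed) by the $2^k$ from the current term: the integration by parts returns a quantity at least as large as the one you started with, and no smoothing is extracted. Saying the current is ``controlled by the conservation law'' does not help, because that control is identical in strength to the trivial control on $\rho_{j_1,j_2}$ that already fails by a factor of roughly $2^{4\epsilon M_t}$ on $\mathcal{B}$. Your quantitative claims for the resonant slab (volume $2^{3k-m^\star}$ beating $2^{(1-5\epsilon)M_t}$) are likewise unsubstantiated with only $|\hat\rho_{j_1,j_2}|\lesssim 2^{-j_2}$ available, since on $\mathcal{B}$ one has $k\approx 2j_1\gtrsim 5M_t/4$.

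The paper avoids this by conjugating with free transport before doing the normal form: it keeps the $v$-integral outside and works with the profile $g(s,x,v)=f(s,x+s\hat v,v)$, so the phase is $X(s)\cdot\xi-s\hat v\cdot\xi$ and the time derivative of the phase is $(\hat V(s)-\hat v)\cdot\xi$, not $\hat V(s)\cdot\xi$. Then $\partial_s\hat g$ is genuinely quadratic ($E\cdot\nabla_v f$), which is the smallness your version lacks, and the resonance analysis is in the relative-velocity direction $\theta_{V(s)}(v)=(\hat V(s)-\hat v)/|\hat V(s)-\hat v|$: non-resonance requires simultaneously $|\tilde v-\tilde V(s)|\gtrsim 2^{l_1}$ and $|\theta_{V(s)}(v)\cdot\tilde\xi|\gtrsim 2^{l_2}$, which is why the paper needs the extra $v$-angular cutoffs $\varphi_{l_1,\alpha}$ that have no counterpart in your decomposition. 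The complementary regions are not handled by your near/far-axis bounds: the region where $v$ is nearly parallel to $V(s)$ or $\theta_{V(s)}(v)$ is nearly azimuthal is controlled by a spherical-cap measure estimate (the $\epsilon^{3/2}$ bound in (\ref{april29eqn1})), and the region where $\xi\perp\theta_{V(s)}(v)$ is controlled by a thin-cylinder kernel estimate (\ref{april28eqn1}) combined with cylindrical symmetry, the lower bound (\ref{2022jan26eqn41}) on $|\slashed X(s)|$, and the weighted space-time estimate (\ref{jan12eqn36}). Your proposal correctly identifies the need for a time-resonance/normal-form mechanism and correctly flags the quadratic commutator term as delicate, but without passing to the transported profile (or an equivalent device making the phase derivative $(\hat V(s)-\hat v)\cdot\xi$), the linear transport term destroys the gain and the argument as written does not close.
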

\begin{proof}
Let 
\[
l_1:= j_1-M_t- 5\epsilon M_t, \quad \alpha = \frac{2}{3}l_1, \quad l_2:= \alpha - 20\epsilon M_t, \quad \theta_{V(s)}(v):=(\hat{V}(s)-\hat{v})/|\hat{V}(s)-\hat{v}|.
\]

 Based on the possible size of  $\theta_{V(s)}(v)\cdot\xi/|\xi| $,  $|\hat{V}(s)-\hat{v}|$, and the angle between   $ (-X_2(s), X_1(s),0)/|\slashed X(s)|$ and $ \theta_{V(s)}(v)$,  we decompose the integral in (\ref{june1eqn30}) into three parts as follows, 
\be\label{june1eqn52}
\int_{s_1}^{s_2} \tilde{V}(s)\cdot  E_{k;j_1, j_2}(s, X(s))  \psi_{[  (M_t-17j_1)/13-2\epsilon M_t, 2j_1-(2-15\epsilon) M_t ]}(\slashed X(s))  d s= \sum_{i=1,2,3} \int_{s_1}^{s_2}   H_{k,j_1,j_2}^i(s ) d s,  
\ee
where
\[
 H_{k,j_1,j_2}^1(s )=  \int_{\R^3} \int_{\R^3 } e^{i X(s)\cdot \xi- i s\hat{v}\cdot \xi} i  \tilde{V}(s)\cdot \xi |\xi|^{-2}  \hat{g}(s ,\xi ,v) \varphi_{j_1}(\slashed v)\varphi_{j_2}(v)  \varphi_k(\xi) \varphi_{l_1,\alpha}(v, X(s), \tilde{V}(s))
\]
\be 
  \times \psi_{[  (M_t-17j_1)/13-2\epsilon M_t, 2j_1- (2-15\epsilon) M_t ]}(\slashed X( s)) d v d \xi ,
  \ee
  \[
   H_{k,j_1,j_2}^2(s )=  \int_{\R^3} \int_{\R^3 } e^{i X(s)\cdot \xi- i s \hat{v}\cdot \xi}   i  \tilde{V}(s)\cdot \xi |\xi|^{-2} \hat{g}(s ,\xi ,v) \varphi_{j_1}(\slashed v)\varphi_{j_2}(v)    \varphi_k(\xi)\psi_{\leq l_2 }(  \theta_{V(s)}(v)\cdot \tilde{\xi} )
  \]
\be 
  \times  \big(1- \varphi_{l_1,\alpha}(v, X(t), \tilde{V}(s))\big)  \psi_{[  (M_t-17j_1)/13-2\epsilon M_t, 2j_1- (2-15\epsilon) M_t ]}(\slashed X(s)) d v d \xi ,
  \ee
  \[
 H_{k,j_1,j_2}^3(s )=  \int_{\R^3} \int_{\R^3 } e^{i X(s)\cdot \xi- i s \hat{v}\cdot \xi}   i  \tilde{V}(s)\cdot \xi |\xi|^{-2} \hat{g}(s ,\xi ,v)\varphi_{j_1}(\slashed v)\varphi_{j_2}(v)    \varphi_k(\xi)  \psi_{> l_2 }(  \theta_{V(s)}(v)\cdot \tilde{\xi} )
  \]
 \be\label{june2eqn1} 
 \times  \big(1- \varphi_{l_1,\alpha}(v, X(s), \tilde{V}(s))\big)  \psi_{[  (M_t-17j_1)/13-2\epsilon M_t, 2j_1- (2-15\epsilon) M_t ]}(\slashed X(s )) d v d \xi.
  \ee
   where the cutoff function $ \varphi_{l_1,\alpha}(v, \tilde{V}(t))$ is defined as follows, 
 \be\label{cutofffunc}
 \varphi_{l_1,\alpha}(v, X(s), \tilde{V}(s)):= \psi_{\leq l_1  }(|\tilde{v}-\tilde{V}(s)|) + \psi_{>l_1  }(|\tilde{v}-\tilde{V}(s)|)\psi_{\leq \alpha} (\theta_{V(s)}(v)\times (-X_2(s), X_1(s),0)/|\slashed X(s)|). 
 \ee

\noindent $\bullet$\qquad The estimate of $ H_{k,j_1,j_2}^1(s )$. \qquad Note that, in terms of kernel, we have
 \[
 H_{k,j_1,j_2}^1 (s )=  \int_{\R^3} \int_{\R^3} \tilde{V}(s)\cdot \tilde{K}_k(y) f(s,x-y, v) \varphi_{j_1}(\slashed v)\varphi_{j_2}(v) \varphi_{l_1,\alpha}(v, X(s), \tilde{V}(s)) d v d y,
 \]
 where the kernel $\tilde{K}_k(y)$ was defined in (\ref{may7eqn32}).   Note that, for any fixed  $a, b\in \mathbb{S}^2, a\neq b,$ $0< \epsilon \ll 1$, we have
 \be\label{april29eqn1}
S_{a,b}:=\{c: c\in \mathbb{S}^2, |(a-c)\times b|\leq \epsilon\}, \quad |S_{a,b}|\lesssim \epsilon^{3/2}. 
\ee
 The above claim follows from the following argument. Let 
 $
  \mathbf{e}_1:=b,\quad \mathbf{e}_2:= ({a - (b\cdot a) b})/{|a-(b\cdot a) b |},  \quad \mathbf{e}_3= \mathbf{e}_1\times \mathbf{e}_2.$ Then $\{\mathbf{e}_1, \mathbf{e}_2, \mathbf{e}_3\}$ is a orthonormal frame. In terms of the orthonormal frame,  we have
 \begin{multline}\label{june1eqn60}
 a= a_1 \mathbf{e}_1 + a_2 \mathbf{e}_2, \quad c=\cos\theta\cos \phi \mathbf{e}_1+ \cos\theta \sin  \phi \mathbf{e}_2+\sin\theta \mathbf{e}_3,\quad  \theta\in [-\pi/2, \pi/2], \phi \in [0, 2\pi],  \\ 
(a-c)\times b  = -(a_2-\cos\theta \sin  \phi) \mathbf{e}_3 + \sin\theta \mathbf{e}_2, \\ 
 |(a-c)\times b|\leq \epsilon \quad \Longrightarrow |\sin \theta  |\leq \epsilon, \quad |a_2-\sin \phi|\lesssim \epsilon, \quad |a-c|\lesssim |a_1-\cos\phi|+|a_2-\sin\phi| + \epsilon \lesssim \epsilon^{1/2},\\ 
   \Longrightarrow \big|\{(\theta, \phi): \theta\in [-\pi/2, \pi/2], \phi\in [0, 2\pi],  |\sin \theta  |\leq \epsilon, \quad |a_2-\sin \phi|\lesssim \epsilon  \}\big|\lesssim \epsilon^{3/2}.
 \end{multline}

If $v\in  supp_v (\psi_{\leq \alpha} (\theta_{V(s)}(v)\times (-X_2(s), X_1(s),0)/|\slashed X(s)|) \psi_{n}(|\tilde{v}-\tilde{V}(s)|))$, $n\in[l_1, 2]\cap \mathbb{Z}$, after letting $\epsilon=2^{n+\alpha}$,  the following estimate holds from    estimates in (\ref{june1eqn60}), 
\[
Vol\big(supp_v(\psi_{\leq \alpha} (\theta_{V(s)}(v)\times (-X_2(s), X_1(s),0)/|\slashed X(s)|) \psi_{n}(|\tilde{v}-\tilde{V}(s)|)\varphi_{j_2}(v))\big) \lesssim 2^{3j_2}2^{3(n+\alpha)/2},\]
\[
|\hat{v}-\tilde{v}|\leq \langle v \rangle^{-2},\quad \Longrightarrow  2^{n-2} \leq |\hat{V}(s)-\hat{v}|+ 2^{-2M_t + 3\epsilon M_t} \lesssim 2^{(n+\alpha)/2}+2^{-2M_t + 3\epsilon M_t},
\]
\be
   \Longrightarrow n\leq \max\{\alpha, -2 M_t +2\epsilon M_t \}+C=\alpha+C,
\ee
where $C$ is some absolutely constant.

 From the above estimate, the estimate of the kernel  $\tilde{K}_k(y)$ in (\ref{may7eqn51}), and the volume of support of $v$, we have
\be\label{june2eqn61}
| H_{k,j_1,j_2}^1 (s )|\lesssim 2^{\epsilon M_t}\big( 2^{-k+3j_2+ 2l_1} + 2^{-k+3j_2+ 3\alpha}\big)\lesssim 2^{(1-3\epsilon)M_t}. 
\ee

\noindent $\bullet$\qquad The estimate of $ H_{k,j_1,j_2}^2(s )$. \qquad Note that, in terms of kernel, we have
\[
H_{k,j_1,j_2}^2(s) = \int_{\R^3} \int_{\R^3} f(s,X(s)-y, v) \tilde{K}_{k,l_2}(y, v,V(s))  \big(1- \varphi_{l_1,\alpha}(v, X(s), \tilde{V}(s))\big)
\]
 \be\label{april28eqn11}
\times \psi_{[  (M_t-17j_1)/13-2\epsilon M_t, 2j_1- (2-15\epsilon) M_t ]}(\slashed X(s)) \varphi_{j_1}(\slashed v)\varphi_{j_2}(v) d v d y, 
 \ee
 where
 \[
 \tilde{K}_{k,l_2}(y, v,V(s)) = \int_{\R^3 } e^{i y\cdot \xi} i \xi |\xi|^{-2} \varphi_k(\xi)   \psi_{\leq l_2 }(  \theta_{V(s)}(v)\cdot \tilde{\xi} )  d \xi.  
 \]
 By doing integration by parts in $\theta_{V(s)}(v)$ direction and $(\theta_{V(s)}(v))^{\bot}$ directions, the following estimate holds for the kernel $ \tilde{K}_{k,l_2}(y, v,V(s))$,
 \be\label{april28eqn1}
 | \tilde{K}_{k,l_2}(y, v,V(s))| \lesssim 2^{2k+ l_2}(1+ 2^{k+l_2}|y\cdot \theta_{V(s)}(v)|)^{-N_c^3} (1+2^k|y\times \theta_{V(s)}(v)|)^{-N_c^3}
 \ee
 From the above estimate, we know that ``$y$'' is localized inside a cylinder with base in the plane perpendicular to $\theta_{V(s)}(v)^{}$. Due to the cutoff function $ (1- \varphi_{l_1,\alpha}(v, X(s), \tilde{V}(s)) )  $ in (\ref{april28eqn11}),  the angle between $\theta_{V(s)}(v)$ and $ (-X_2(s), X_1(s),0)/|\slashed X(s)|)$ is greater than $2^{\alpha}$, which means that the intersection of the cylinder with any $x_1x_2$ plane  is less than $(2^{-k-\alpha})^2.$

 Moreover, note that, for the case we are considering, we have
 \be\label{2022jan26eqn41}
 |\slashed X(s)|\geq 2^{(M_t-17j_1)/13-2\epsilon M_t} \geq 2^{-2j_1-l_2 + 10\epsilon M_t}\geq 2^{-k-l_2+5\epsilon M_t}. 
 \ee
 Therefore, from the cylindrical symmetry of solution and the estimate (\ref{jan12eqn36}) in Proposition \ref{spacetimeestimate}, we have
\[
\int_{t_1}^{t_2}\big| H_{k,j_1,j_2}^2(s)\big| d s \lesssim  2^{2k+l_2} \int_{t_1}^{t_2} \int_{\R^3} \int_{\R^3} (1+ 2^{k+l_2}|y\cdot \theta_{V(s)}(v)|)^{-N_0^3} (1+2^k|y\times \theta_{V(s)}(v)|)^{-N_0^3} f(s, X(s)-y, v) \varphi_{j_1}(\slashed v)
 \]
\[ 
 \times\varphi_{j_2}(v) \big(1- \varphi_{l_1,\alpha}(v, X(s), \tilde{V}(s))\big)  \psi_{[  (M_t-17j_1)/13-2\epsilon M_t, 2j_1- (2-15\epsilon) M_t ]}(\slashed X(s ))  d v d y  ds   \]
 \[\lesssim 1 + \int_{t_1}^{t_2} \int_{\R^3} \int_{|y|\leq 2^{-k-l_2+\epsilon M_t/2}}  \frac{2^{2k+l_2-k-\alpha +\epsilon M_t/10}}{|\slashed X(t)- \slashed y|} f(s, X(s)-y, v)  \psi_{[  (M_t-17j_1)/13-\epsilon M_t, 2j_1- (2-15\epsilon) M_t ]}(\slashed X(s ))  
\] 
 \be\label{june2eqn11}
  \times \varphi_{j_1}(\slashed v)\varphi_{j_2}(v) d y d v  d s   \lesssim 1+ 2^{2k+l_2-k-\alpha +6\epsilon M_t } 2^{-2j_1+ j_2}   \lesssim 2^{(1-5\epsilon) M_t}. 
\ee


\noindent $\bullet$\qquad The estimate of $ H_{k,j_1,j_2}^3(s )$. \qquad

Recall (\ref{june2eqn1}).   Note that, for any $(v,\xi)\in supp( \psi_{> l_2 }(  \theta_{V(s)}(v)\cdot \tilde{\xi} ) (1- \varphi_{l_1,\alpha}(v, X(s), \tilde{V}(s))) )$, we have
 \be\label{april28eqn13}
|\hat{V}(s)-\hat{v}|\gtrsim |\tilde{V}(s)-\tilde{v}|-  2^{-2M_t + 4\epsilon M_t}\gtrsim 2^{l_1}, \quad \Longrightarrow 
|\hat{V}(t)\cdot\tilde{\xi} - \hat{v}\cdot \tilde{\xi}|= |\hat{V}(t)-\hat{v}||\theta_{V(t)}(v)\cdot \tilde{\xi}|\gtrsim 2^{l_1+l_2}. 
\ee

Let $g(s,x,v):= f(s, x+ s\hat{v}, v).$ For $H_{k,j_1,j_2}^3(s)$, we do integration by parts in time once. As a result, we have
\be\label{june2eqn54}
\int_{s_1}^{s_2} H_{k,j_1,j_2}^3(s)d s = End_{k,j_1,j_2}(s_1, s_2) + \widetilde{H}_{k,j_1,j_2}^1(s_1,s_2) +  \widetilde{H}_{k,j_1,j_2}^2(s_1,s_2),
\ee
where
\[
 End_{k,j_1,j_2}(s_1, s_2):= \sum_{a=1,2} \int_{\R^3} \int_{\R^3 } e^{i X(s_a)\cdot \xi- i  s_a \hat{v}\cdot \xi} i \tilde{V}(s)\cdot \xi |\xi|^{-2}  \hat{g}(s_a ,\xi ,v)     \varphi_k(\xi)   \varphi_{j_1}(\slashed v)\varphi_{j_2}(v) (\hat{V}(s_a)\cdot\xi - \hat{v}\cdot \xi )^{-1} 
\]
\[
\times   \psi_{> l_2 }(  \theta_{V(s_a)}(v)\cdot \tilde{\xi} )  \big(1-  \varphi_{l_1,\alpha}(v, X(s_a), \tilde{V}(s_a))\big)  \psi_{[  (M_t-17j_1)/13-2\epsilon M_t, 2j_1- (2-15\epsilon) M_t ]}(\slashed X(s_a))  d v d \xi
\]
 \be\label{april28eqn14}
=  \sum_{a=1,2}  \sum_{ n\in[l_1,2]\cap \mathbb{Z}}    \sum_{ l\in[l_2,2]\cap \mathbb{Z}} \int_{\R^3} \int_{\R^3} f(s_a, X(s_a)-y, v) \widetilde{K}^0_{k,n,l }(y, X(s_a),  V(s_a), v)  \varphi_{j_1}(\slashed v)\varphi_{j_2}(v) d y d v, 
\ee
\[
 \widetilde{H}_{k,j_1,j_2}^1(s_1, s_2):=\int_{s_1}^{s_2}  \int_{\R^3} \int_{\R^3 } e^{i X(s)\cdot \xi- i s \hat{v}\cdot \xi}    i\tilde{V}(s)\cdot \xi |\xi|^{-2} \varphi_k(\xi)  \varphi_{j_1}(\slashed v)\varphi_{j_2}(v)  \p_s  \hat{g}(s ,\xi ,v)   \big(1-  \varphi_{l_1,\alpha}(v, X(s ), \tilde{V}( s))\big) 
  \]
 \be\label{april28eqn15}
  \times   \psi_{[  (M_t -17j_1)/13-2\epsilon M_t, 2j_1- (2-15\epsilon) M_t]}(\slashed X(t ))(\hat{V}(s)\cdot\xi - \hat{v}\cdot \xi )^{-1}   \psi_{> l_2 }(  \theta_{V(s)}(v)\cdot \tilde{\xi} ) d v d \xi d s,  
\ee
\[
 \widetilde{H}_{k,j_1,j_2}^2(s_1, s_2):=\int_{s_1}^{s_2}  \int_{\R^3} \int_{\R^3 } e^{i X(s)\cdot \xi }  \p_s \big( i \tilde{V}(s)\cdot \xi |\xi|^{-2}  \psi_{> l_2 }(  \theta_{V(s)}(v)\cdot \tilde{\xi} )
  \psi_{[  ( M_t-17j_1)/13-2\epsilon M_t, 2j_1- (2-15\epsilon) M_t ]}(\slashed X(t ))      \]
  \[
  \times (\hat{V}(s)\cdot\xi - \hat{v}\cdot \xi )^{-1} \big(1-  \varphi_{l_1,\alpha}(v, X(s), \tilde{V}(s))\big) \big)   \varphi_k(\xi)   \varphi_{j_1}(\slashed v)\varphi_{j_2}(v) \hat{f}(s ,\xi ,v) d v d \xi d s 
\]
 \be\label{april28eqn16}
  =  \sum_{ n\in[l_1,2]\cap \mathbb{Z}}    \sum_{ l\in[l_2,2]\cap \mathbb{Z}} \int_{s_1}^{s_2} \int_{\R^3} \int_{\R^3} f(s, X(s)-y, v) \widetilde{K}^1_{k,n,l}(y,X(s),  V(s), v)   \varphi_{j_1}(\slashed v)\varphi_{j_2}(v) d y d v d s,
\ee
where the kernels $\widetilde{K}^i_{k,l_1,l_2}(y, V(s), v), i\in\{0,1\},$ are defined as follow,
\[
\widetilde{K}^0_{k,n,l   }(y, X(s_a),  V(s_a), v):= \int_{\R^3} e^{i y \cdot \xi} i\tilde{V}(s_a)\cdot  \xi |\xi|^{-2}\varphi_k(\xi) (\hat{V}(s_a)\cdot\xi - \hat{v}\cdot \xi )^{-1}   \psi_{  l  }(  \theta_{V(s_a)}(v)\cdot \tilde{\xi} )  \psi_{n}(|\tilde{v}-\tilde{V}(s_a)|) 
\]
\be\label{april28eqn21}
 \times   \psi_{[  ( M_t-17j_1)/13-2\epsilon M_t, 2j_1- (2-15\epsilon) M_t ]}(\slashed X(s_a ))   \big(1-  \varphi_{l_1,\alpha}(v, X(s_a ), \tilde{V}( s_a))\big)d \xi, 
\ee
\[
\widetilde{K}^1_{k, n,l  }(y, X(s), V(s), v):= \int_{\R^3} e^{i y \cdot \xi} \p_s \big( i \tilde{V}(s)\cdot \xi |\xi|^{-2}  \psi_{> l_2 }(  \theta_{V(s)}(v)\cdot \tilde{\xi} )
  (\hat{V}(s)\cdot\xi - \hat{v}\cdot \xi )^{-1}    \psi_{  l  }(  \theta_{V(s)}(v)\cdot \tilde{\xi} )  
\]
\be\label{april28eqn22}
\times   \psi_{[  ( M_t-17j_1)/13-2\epsilon M_t, 2j_1- (2-15\epsilon) M_t]}(\slashed X(s ))   \psi_{n}(|\tilde{v}-\tilde{V}(s)|)  \big(1-  \varphi_{l_1,\alpha}(v, X(s), \tilde{V}(s))\big) \big) \varphi_k(\xi) d \xi.
\ee

\noindent $\bullet$\quad The estimate of $ End_{k,j_1,j_2}(t_1, t_2)$.

Recall (\ref{april28eqn14}).   Note that,   by using the estimate   (\ref{april28eqn13}) and  doing integration by parts in $\theta_{V(t_a)}(v)$ direction and $(\theta_{V(s_a)}(v))^{\bot}$ directions, the following estimate holds for the kernel $\widetilde{K}^0_{k,l_1,l_2}(y, V(s_a), v)$. 
\[
|\widetilde{K}^0_{k,n,l}(y, X(s_a), V(s_a ), v)| \lesssim  2^{k+ l-l-n}   \psi_{[  (M_t-17j_1)/13-2\epsilon M_t, 2j_1- (2-15\epsilon) M_t ]}(\slashed X(s_a)) 
\]
 \be\label{ma}
\times (1+2^{k+l}|y\cdot\theta_{V(s_a)}(v) |)^{- N_c^3} (1+2^{k }|y\times \theta_{V(s_a )}(v) |)^{- N_c^3 }   . 
\ee 
From the above estimate, the estimate (\ref{2022jan26eqn41}), and the cylindrical symmetry of the distribution function,  we have 
\[
\big|  End_{k,j_1,j_2}(s_1, s_2)\big| \lesssim \sum_{a=1,2} \sum_{ n\in[l_1,2]\cap \mathbb{Z}}    \sum_{ l\in[l_2,2]\cap \mathbb{Z}} \int_{\R^3} \int_{\R^3} f(s_a, X(s_a)-y, v)  2^{k -n} (1+2^{k+l}|y\cdot\theta_{V(s_a)}(v) |)^{- N_0^3} \] 
\[
\times  (1+2^{k }|y\times \theta_{V(s_a )}(v) |)^{- N_0^3 }  \varphi_{j_1}(\slashed v)\varphi_{j_2}(v) \psi_{[  (M_t-17j_1)/13-2\epsilon M_t, 2j_1- (2-15\epsilon) M_t ]}(\slashed X(s_a )) d y d v 
\]
\be\label{june2eqn51}
\lesssim 2^{2\epsilon M_t}+ 2^{ k-l_1-k-\alpha +4\epsilon M_t }2^{-j_2} 2^{-(M_t-17j_1)/13 } \lesssim 2^{(1-10\epsilon)M_t}. 
\ee

\noindent $\bullet$\quad The estimate of $  \widetilde{H}_{k,j_1,j_2}^1 (s_1,s_2)$.

Recall (\ref{april28eqn15}) and  (\ref{vlasovpo}). Note that, 
\[
\p_s g(s,x,v) = \nabla_x\phi(s,x+s\hat{v})\cdot \nabla_v f(s,x+s\hat{v}, v ). 
\]
Hence, after doing integration by parts in $v$, we have 
\[
   \widetilde{H}_{k,j_1,j_2}^1 (s_1,s_2):=\int_{s_1}^{s_2}  \int_{\R^3} \int_{\R^3 }\int_{\R^3}  e^{i X(s)\cdot \xi }   \psi_{[  (M_t-17j_1)/13-2\epsilon M_t, 2j_1- (2-15\epsilon) M_t ]}(\slashed X(s)) \]
   \[
   \times  i \tilde{V}(s)\cdot \xi |\xi|^{-2} \varphi_k(\xi) \widehat{f}(s,\xi-\eta,v)\widehat{\nabla_x \phi}(s, \eta) \cdot \nabla_v \big(  \varphi_{j_1}(\slashed v) \varphi_{j_2}(v)  (1-  \varphi_{l_1,\alpha}(v, X(s ), \tilde{V}( s)))
  \]
  \[
    \times  \psi_{> l_2 }(  \theta_{V(s)}(v)\cdot \tilde{\xi} )   \psi_{ >   l_1 }(|\tilde{v}-\tilde{V}(s)|)  (\hat{V}(s)\cdot\xi - \hat{v}\cdot \xi )^{-1} \big) d v d\eta d \xi d s 
  \]
 \be\label{april28eqn17}
=\sum_{ n\in[l_1,2]\cap \mathbb{Z}}    \sum_{ l\in[l_2,2]\cap \mathbb{Z}}  \int_{s_1}^{s_2}  \int_{\R^3} \int_{\R^3 }\int_{\R^3}    \widetilde{K}^2_{k,n,l}(y, V(s), v)\cdot \nabla_x\phi (s, X(s)-y) f(s,X(s)-y,v)  dy d v d s,
\ee
where the kernel  $    \widetilde{K}^2_{k,n,l}(y, V(s), v)$ is defined as follows, 
 \[
   \widetilde{K}^2_{k,n,l}(y, V(s), v):= \int_{\R^3} e^{i x\cdot \xi }   \nabla_v \big( \varphi_{j_1}(\slashed v)  \varphi_{j_2}(v) \big(1-  \varphi_{l_1,\alpha}(v, X(s ), \tilde{V}( s)) (\hat{V}(s)\cdot\xi - \hat{v}\cdot \xi )^{-1}  \psi_{l }(  \theta_{V(s)}(v)\cdot \tilde{\xi} )
 \]
 \be\label{april29eqn11}
  \times     \psi_{ n }(|\tilde{v}-\tilde{V}(s)|) \big)  i \tilde{V}(s)\cdot \xi |\xi|^{-2} \varphi_k(\xi) \psi_{[  (M_t-17j_1)/13-2\epsilon M_t, 2j_1- (2-15\epsilon) M_t ]}(\slashed X(s )) d \xi.   
\ee

By   doing integration by parts in $\theta_{V(s)}(v)$ direction and $(\theta_{V(s)}(v))^{\bot}$ directions,  as a result of direct computation, we have
 \be\label{may5eqn51}
|\widetilde{K}^2_{k,n,l   }(y, V(s), v)| \lesssim 2^{k+ l}\big( 2^{ -2l-2n-j_2 } + 2^{ -l-n-j_1 } \big)   (1+2^{k+l}|y\cdot\theta_{V(s)}(v) |)^{- N_c^3 } (1+2^{k }|y\times \theta_{V(s)}(v) |)^{-  N_c^3 } .
\ee
Moreover, from the   estimate   (\ref{june9eqn10}) in Proposition \ref{spacetimeestimate}, and the rough estimate (\ref{may31eqn1}) in Lemma \ref{roughesti}, we have
\[
 \int_{s_1}^{s_2}  \|  \nabla_x\phi (s, \cdot ) \|_{L^\infty_x } ds \lesssim \sum_{k, j_2\in \Z_+, j_1\in [0, j_2+2]\cap \Z} \min\{ 2^{-k+2j_1+j_2}, 2^{2k-j_2},  2^{2k-n_c j_2}   \tilde{M}^{c}_n(t), 2^{2\epsilon M_t} + 2^{k-2j_1+j_2+6\epsilon M_t}\}
\]
\be\label{june2eqn31}
\lesssim 2^{(1+4\epsilon) M_t}.  
\ee

From the above estimate,  the estimate  of kernel in (\ref{may5eqn51}),  the estimate (\ref{2022jan26eqn41}), and the cylindrical symmetry of the distribution function, we have 
 \[
\big|\widetilde{H}_{k,j_1,j_2}^1 (s_1,s_2)\big|\lesssim \sum_{ n\in[l_1,2]\cap \mathbb{Z}}    \sum_{ l\in[l_2,2]\cap \mathbb{Z}}   \int_{s_1}^{s_2}  \|  \nabla_x\phi (s, \cdot ) \|_{L^\infty_x }  \frac{2^{ k+l-k-\alpha +\epsilon M_t/10}}{|\slashed X(t)- \slashed y|}\big( 2^{ -2l-2n-j_2 } + 2^{ -l-n-j_1 } \big)  (1+2^{k+l}|y\cdot\theta_{V(s)}(v) |)^{- N_c^3 } \]
\[
\times  (1+2^{k }|y\times \theta_{V(s)}(v) |)^{-  N_c^3 }  \varphi_{j_1}(\slashed v)\varphi_{j_2}(v)  f(t,X(s)-y,v)\psi_{[  ( M_t-17j_1)/13-2\epsilon M_t, 2j_1- (2-15\epsilon) M_t]}(\slashed X(s ))  dy d v d s\]
\[
\lesssim \int_{s_1}^{s_2}   2^{-j_2+4\epsilon M_t}\big( 2^{ -l_1-\alpha-j_1} + 2^{-l_2 -2l_1 -\alpha-j_2 }  \big) 2^{-( M_t-17j_1)/13}    \|  \nabla_x\phi (t, \cdot ) \|_{L^\infty_x }   ds 
\]
\be\label{june2eqn52}
  \lesssim  2^{15\epsilon M_t} \big( 2^{ -l_1-\alpha-j_1} + 2^{-l_2 -2l_1 -\alpha-j_2 }  \big) 2^{-( M_t-17j_1)/13} \lesssim 2^{(1-3\epsilon)M_t}. 
\ee

\vo 

\noindent $\bullet$\quad The estimate of $  \widetilde{H}_{k,j_1,j_2}^2 (s_1,s_2)$.

Recall (\ref{april28eqn16}) and the definition of kernel $\widetilde{K}^1_{k, n,l  }(y, V(s), v)$ in (\ref{april28eqn22}). As a result of direct computations, the following estimate holds for the kernel after doing integration by parts in $\xi$ in $\theta_{V(s)}(v)$ direction and $(\theta_{V(s )}(v))^{\bot}$ directions, 
\[ 
\big| \widetilde{K}^1_{k, n,l  }(y, V(s), v)\big| \lesssim 2^{k+l-l-n} \big( 2^{-l-n-(1-\epsilon )M_t} |\nabla_x\phi(s, X(s)| + 2^{-\alpha-(1-\epsilon )M_t} |\slashed V(s)| |\slashed X(s)|^{-1} \big)  \psi_{ n }(|\tilde{v}-\tilde{V}(s)|)
\]
\[
\times (1+2^{k+l}|y\cdot\theta_{V(s)}(v) |)^{- N_c^3 } (1+2^{k }|y\times \theta_{V(s)}(v) |)^{-  N_c^3 }\psi_{[  (M_t-17j_1)/13-2\epsilon M_t, 2j_1-(2-15\epsilon) M_t ]}(\slashed X(s ))\]
\[
\lesssim2^{k-n+10\epsilon M_t} \big( 2^{-l-n- M_t} |\nabla_x\phi(s, X(s)| + 2^{-\alpha +n } 2^{(17j_1-M_t)/13  }  \big) (1+2^{k+l}|y\cdot\theta_{V(s)}(v) |)^{- N_c^3 }\]
\[
\times   (1+2^{k }|y\times \theta_{V(s)}(v) |)^{-  N_c^3 }\psi_{[  (M_t-17j_1)/13-2\epsilon M_t, 2j_1- (2+15\epsilon) M_t ]}(\slashed X(s)).
\]
From the above estimate of kernel, the estimate (\ref{2022jan26eqn41}), the cylindrical symmetry of the distribution function,    the obtained estimate (\ref{june2eqn31}), and the estimate (\ref{jan12eqn36}) in Proposition  \ref{spacetimeestimate}, we have 
\[
\big|\widetilde{H}_{k,j_1,j_2}^2 (s_1,s_2)\big|\lesssim \sum_{ n\in[l_1,2]\cap \mathbb{Z}}    \sum_{ l\in[l_2,2]\cap \mathbb{Z}} 2^{k-n+13\epsilon M_t} 2^{-k-\alpha }  2^{(17j_1-M_t)/13  } \big( 2^{(1+6\epsilon)M_t}  2^{-l-n- M_t}2^{-j_2}  + 2^{-\alpha +n }  2^{-2j_1+j_2} \big)
\] 
 
\be\label{june2eqn53}
 \lesssim  2^{-2l_1-l_2-\alpha +30\epsilon M_t} 2^{-j_2+ (17j_1-M_t)/13  } + 2^{-2\alpha }2^{-2j_1+j_2 +  (17j_1-M_t)/13   +30\epsilon M_t} \lesssim 2^{(1-3\epsilon)M_t}. 
\ee

Recall the decomposition (\ref{june2eqn54}), from the estimates (\ref{june2eqn51}), (\ref{june2eqn51}), and (\ref{june2eqn51}), we have
\[
\big| \int_{s_1}^{s_2} H_{k,j_1,j_2}^3(s)d s\big| \lesssim 2^{(1-3\epsilon)M_t}. 
\]
Recall the decomposition (\ref{june1eqn52}). Our desired estimate (\ref{june1eqn30}) holds from the above estimate and the  estimates (\ref{june2eqn61}) and (\ref{june2eqn11}).  
\end{proof}

\section{Proof of main theorems}\label{proofofthe}
As summarized in Proposition \ref{majority} and  Proposition \ref{majorityset},  once we have good control of the majority set,  we can show  the sub-linearity of the high momentum.  The proof of Theorem \ref{maintheoremradial}  and Theorem \ref{maintheorem} will follow in the exactly same sprite. For the sake of readers, we  still provide detailed proof for both of them here. 

\subsection{Proof of Theorem \ref{maintheoremradial}}

Based on the possible size of ``$|v|$'', we decompose $M_n(t)$ into three parts as follows, 
\[
\int_{\R^3}  \int_{\R^3} (1+|v|)^{n_r} |f(t,x,v)| d x dv  
 = \int_{\R^3 }\int_{|v|\geq   ( \tilde{M}_n^r(t)  )^{  (5+4\delta)/  ((6-2\delta)(n-1)  )}  }(1+|v|)^{n_r} |f(t,x,v)| d x dv,
\]
\[
+  \int_{\R^3}\int_{|v|\leq ( \tilde{M}^r_n(t)  )^{  (5+4\delta)/  ((6-2\delta)(n-1)  ) } }(1+|v|)^{n_r} |f(t,x,v)| d x dv.
\]

 Note that, from the relation (\ref{jan13eqn11}) in Lemma \ref{majority}, we have  $| X(0;t,x,v) |+|V(0;t,x,v) |\gtrsim  (\tilde{M}^r_n(t))^{1/(2{n_r})}$ if either    $|x|\gtrsim  (\tilde{M}^r_n(t))^{(1+\delta)/(2{n_r})}$ or $|v|\gtrsim \big( \tilde{M}^r_n(t) \big)^{  (5+3\delta)/  ((6-2\delta)({n_r}-1)  ) }$. If $|x|\gtrsim (\tilde{M}^r_n(t))^{(1+\delta)/(2{n_r})}$, then we have
\[
|X(0;t,x,v)|\geq |x| - t\geq |x|-(\tilde{M}^r_n(t))^{1/(2{n_r})}|\gtrsim (1+|x|).
\]
Therefore, from the above estimate and the assumption on the initial data in (\ref{assumptiononinitialdata}),   the following estimate holds if $|v|\gtrsim \big( \tilde{M}^r_n(t) \big)^{  (5+3\delta)/  ((6-2\delta)( {n_r}-1)  ) }$  regardless the size of $|x|$, 
\be\label{feb1eqn2}
|f(t,x,v)|= |f_0(X(0;t,x,v),V(0;t,x,v))|\leq (\tilde{M}^r_n(t))^{-4  }(1+|x|)^{-4}. 
\ee
Moreover,  if  $|v| \gtrsim   \big( \tilde{M}^r_n(t) \big)^{5/(2n_r)}$, from the equation (\ref{jan12eqn21}) and the estimate (\ref{jan31eqn11}),  we have 
\[
|V(0;t,x,v)|\gtrsim |v|- \int_{0}^t \big(1+ \big( \tilde{M}^r_n(t) \big)^{  (5+\delta)/  ( (3-\delta)({n_r}-1)  )}\big) d s \gtrsim (1+ |v|). 
\]
From the above two estimates and the assumption on the initial data in (\ref{assumptiononinitialdata}),   the following estimate holds if $|v|\gtrsim   \big( \tilde{M}^r_n(t) \big)^{  (5+3\delta)/  ((6-2\delta)({n_r}-1)  ) } $  regardless the size of $|x|$, 
\[
|f(t,x,v)|=  |f_0(X(0;t,x,v),V(0;t,x,v))|\lesssim   (1+|x|  )^{-4} (1+|v |)^{- {n_r}-4}.
\]
Therefore, from  the above estimate, we have
\[
  \int_{\R^3 }\int_{|v|\geq   ( \tilde{M}^r_n(t)  )^{  (5+4\delta)/  ((6-2\delta)(n-1)  )}  }(1+|v|)^{{n_r}} |f(t,x,v)| d x dv \lesssim 1.
\] 
 
From the conservation law (\ref{conservationlaw}), we have 
\[
\int_{\R^3}\int_{|v|\leq ( \tilde{M}^r_n(t)  )^{  (5+4\delta)/  ((6-2\delta)({n_r}-1)  ) } }(1+|v|)^{n_r} |f(t,x,v)| d x dv\lesssim \big( \tilde{M}^r_n(t) \big)^{  (5+4\delta){n_r}/  ((6-2\delta)({n_r}-1)  ) }.
\]
To sum up, we have
\[
M_{n_r}(t)\lesssim \big(\tilde{M}^r_n(t) \big)^{  (5+4\delta){n_r}/  ((6-2\delta)({n_r}-1)  ) }. 
\]
Since the above estimate holds for any $t \in[0,T^{\ast})$ and $\tilde{M}^r_n(t)$ is an increasing function with respect to $t$, the following estimate holds for any $s\in [0, t]$, 
\[
M_{n_r}(s)\lesssim \big( \tilde{M}_{n }^r (s) \big)^{  (5+4\delta)n_r/  ((6-2\delta)(n_r-1)  ) }\leq \big( \tilde{M}_n^r(  t) \big)^{  (5+4\delta)n_r/  ((6-2\delta)(n_r-1)  ) }.
\]
Hence
\[
\tilde{M}^r_n(t)=\sup_{s\in [0, t]}M_{n_r}(s)+  (1+t)^{2n_r}  \lesssim \big( \tilde{M}_{n_r}(t) \big)^{  (5+4\delta)n_r/  ((6-2\delta)(n_r-1)  ) } +  (1+t)^{2 n_r}, 
 \quad \Longrightarrow \tilde{M}^r_n(t)\lesssim (1+t)^{  2n_r}. 
\]
Therefore, from the estimate (\ref{jan31eqn11}) in Lemma \ref{roughcontrol}, we have
\[
\|\nabla_x\phi(t,x)\|_{L^\infty_x} \lesssim  (1+t)^{  2 n_r }.
\]
We have shown the desired fact that $\nabla_x\phi\in L^\infty([0,T^{\ast})\times \R_{x}^3)$.  Hence finishing the proof of Theorem \ref{maintheoremradial}. 
 
\qed

\subsection{Proof of Theorem \ref{maintheorem}}
From the conservation law (\ref{conservationlaw}), we have 
\be\label{march20eqn11}
 \big| \int_{\R^3}\int_{|v|\leq 2^{(1- \epsilon/3)M_t }}(1+|v|)^{n_c} f(t,x,v)   d x dv\big|\lesssim  2^{(n_c-1)(1- \epsilon/3)M_t}\leq (\tilde{M}_{n}^c (t))^{ 1- \epsilon/3 }. 
\ee
Recall the definition of the majority set $R^{cyl}(t,t)$ in (\ref{may10majority}) and the definition of $\beta_t$ in (\ref{vlasovpo}). From the   estimate   (\ref{may11eqn61}) in Proposition \ref{majorityset}, we know that  $|X(0;t,x ,v ) |+|  V(0;t,x ,v ) |\geq 2^{  M_t /2}$ if $|v|\geq 2^{(1- \epsilon/3)M_t }$. From the decay assumption of the initial data in (\ref{assumcylinrical}) and the following estimate holds if $|v|\geq 2^{(1- \epsilon/3)M_t }$, 
\be\label{may11eqn31}
|f(t,x,v)|=| f_0(X(0;t,x ,v ), V(0;t,x ,v ))| \lesssim (1+|X(0;t,x ,v )|+|V(0;t,x ,v )|)^{-N_c+10}\lesssim 2^{-4 n_c M_t }.
\ee
Moreover, recall (\ref{characteristicseqn}), from the   $ L^\infty_x$-type estimate of electric field in (\ref{june2eqn71}), we have
\be\label{may11eqn32}
\big||v|-|V(0;t,x,v)|\big| \lesssim    2^{5M_t/3+2\epsilon M_t}, \quad \Longrightarrow |v|\sim |V(0;t,x,v)|,\quad  \textup{if\,\,} |v|\gtrsim 2^{2 M_t},
\ee  
\be\label{may11eqn33}
\big||x|-|X(0;t,x,v)|\big| \lesssim    2^{\epsilon M_t}, \quad \Longrightarrow |x|\sim |X(0;t,x,v)|,\quad  \textup{if\,\,} |x|\gtrsim 2^{2\epsilon M_t}.
\ee 
To sum up, after combining the above estimates (\ref{may11eqn31}--\ref{may11eqn33}), we know that the following estimate holds if $|v|\geq 2^{(1- \epsilon/3)M_t }$, 
\[
|f(t,x,v)|=|f_0(X(0;t,x,v),V(0;t,x,v))|\lesssim (1+|x|)^{-4}(1+|v |)^{-n_c -4}.  
\]
From the above estimate, we have
\be\label{may11eqn34}
 \big| \int_{\R^3}\int_{|v|\geq 2^{(1- \epsilon/3)M_t }}(1+|v|)^{n_c} f(t,x,v)   d x dv\big|\lesssim 1.
\ee

Therefore, recall (\ref{may2eqn1}), from the estimates (\ref{march20eqn11}) and (\ref{may11eqn34}), we know that the following estimate holds for any  $t\in[0,T)$,
\be 
M_{n_c}(t)\lesssim \big( \tilde{M}^c_{n}(t) \big)^{  1- \epsilon/3 }.  
\ee
From the above estimate and 
the fact that $\tilde{M}_n(t)$ is an increasing function with respect to $t $, the following estimate holds for any $s\in [0, t]$,
\[
\tilde{M}^c_{n}(t)=\sup_{s\in [0, t]}M_{n_c}(s)+  (1+t)^{n_c^2}  \lesssim \big( \tilde{M}_{n}^c(  t) \big)^{  1- \epsilon /3}+  (1+t)^{n_c^2}, 
 \quad \Longrightarrow \tilde{M}_{n}^c(t)\lesssim   (1+t)^{n_c^2}. 
\]
From the   $L^\infty_x$ estimate of electric field in (\ref{june2eqn71}), we know that   the desired fact that $\nabla_x\phi\in L^\infty([0,T^{\ast})\times \R_{x}^3)$ holds.  Hence finishing the proof of Theorem \ref{maintheorem}.

\end{document}